\theoremstyle{thmstylethree}
\newtheorem{theorem}{Theorem}[section]
\newtheorem{lemma}{Lemma}[section]
\begin{document}

\title[Sufficient recovery conditions for noise-buried low rank tensors]{Sufficient recovery conditions for noise-buried low rank tensors}

%%=============================================================%%
%% Prefix	-> \pfx{Dr}
%% GivenName	-> \fnm{Joergen W.}
%% Particle	-> \spfx{van der} -> surname prefix
%% FamilyName	-> \sur{Ploeg}
%% Suffix	-> \sfx{IV}
%% NatureName	-> \tanm{Poet Laureate} -> Title after name
%% Degrees	-> \dgr{MSc, PhD}
%% \author*[1,2]{\pfx{Dr} \fnm{Joergen W.} \spfx{van der} \sur{Ploeg} \sfx{IV} \tanm{Poet Laureate} 
%%                 \dgr{MSc, PhD}}\email{iauthor@gmail.com}
%%=============================================================%%

\author*[1]{\fnm{Sergey} \sur{Petrov}}\email{spetrov.msk@gmail.com}

\author[1]{\fnm{Nikolai} \sur{Zamarashkin}}\email{nikolai.zamarashkin@gmail.com}

\affil*[1]{\orgname{INM RAS}, \orgaddress{\street{Gubkina, 8}, \city{Moscow}, \postcode{119333}, \country{Russia}}}

%%==================================%%
%% sample for unstructured abstract %%
%%==================================%%

\abstract{ Low-rank tensor approximation error bounds are proposed for the case of noisy input data that depend on
	low-rank representation type, rank and the dimensionality of the tensor. The bounds show that high-dimensional low-rank structured approximations provide superior noise-filtering properties compared to matrices with the same rank and total element count.
	}

\keywords{low-rank tensor, Gaussian width, noise filtering}

%%\pacs[JEL Classification]{D8, H51}

%%\pacs[MSC Classification]{35A01, 65L10, 65L12, 65L20, 65L70}

\maketitle

\section{Introduction}\label{sec:sec1}
Presence of additive noise is an inevitable practical aspect of many real-world problems that involve physical measurements, including, for example, medical image processing \cite{medic} and channel estimation in wireless communications \cite{worst}. A common basis of noise filtering algorithms is the utilization of the intrinsic structure of the underlying raw data. 

A possible variant of such a structure is sparsity: sparse linear system solutions can be less affected by a right hand side perturbation than dense solutions \cite{sparse}. Another frequent structural occurrence is a low-rank data matrix: theoretical bounds suggest that the eigen subspace \cite{kahan} and the singular subspace \cite{wedin} perturbations caused by additive noise depend on a spectral norm of the additive noise, which is, in most cases, much smaller than the Frobenius norm. That makes low-rank projections a viable tool for denoising, which is often employed for image and video processing \cite{image}\cite{video}.

The aim of this work is to generalize the noisy low-rank approximation studies towards higher-dimensional tensor low-rank structures. Intuitively, the structural denoising properties are expected to improve with the reduction of the number of the structure parameters, since more averaging takes place. The validity of such a phenomena in practice is established in several papers concerning wireless channel estimation: it is seen that tensor low-rank decompositions provide better estimation quality as compared to matrix decompositions applied to the same data \cite{wireless1}\cite{wireless2}.

Our work is focused on establishing a theoretical support for the phenomenon: an approximation quality bound that decays with dimensionality. A noticeable obstacle for low-rank tensor approximation bounds is the fact that the set of low-rank tensors may not be closed, and thus {\it optimal} low-rank approximation algorithms are practically unavailable. To deal with that, we are going to introduce an alternate {\it sufficient approximation condition}, and utilize the concept of 'Gaussian width' \cite{widthbook}, that allows to characterize the denoising properties of almost-arbitrary {\it bounded} sets. We will not be able to use the known Gaussian width properties directly, however, due to a tensor-specific challenge: that is, the set of rank-$R$ canonical tensors of dimensionality $d \geq 3$ with a bounded norm {\it cannot} be represented as a Minkowski sum of $R$ sets of rank-one tensors with a bounded norm. A significant technical part of the paper will be devoted to overcoming that challenge in the particular case of $R = 2$.
 
Using the introduced sufficient approximation algorithm condition, which is commonly fulfilled for the well-known algorithms like ALS \cite{als}, HOSVD \cite{hosvd}, TT-SVD \cite{tt} in practice, and the modified 'Gaussian width' concept, a theoretical approximation bound will be established that is closely fulfilled in practical experiments on synthetic data subject to white noise.

\section{Noise perturbation functional definition}
\begin{figure} 
	\begin{center}
	\includegraphics[width = 1.0\linewidth]{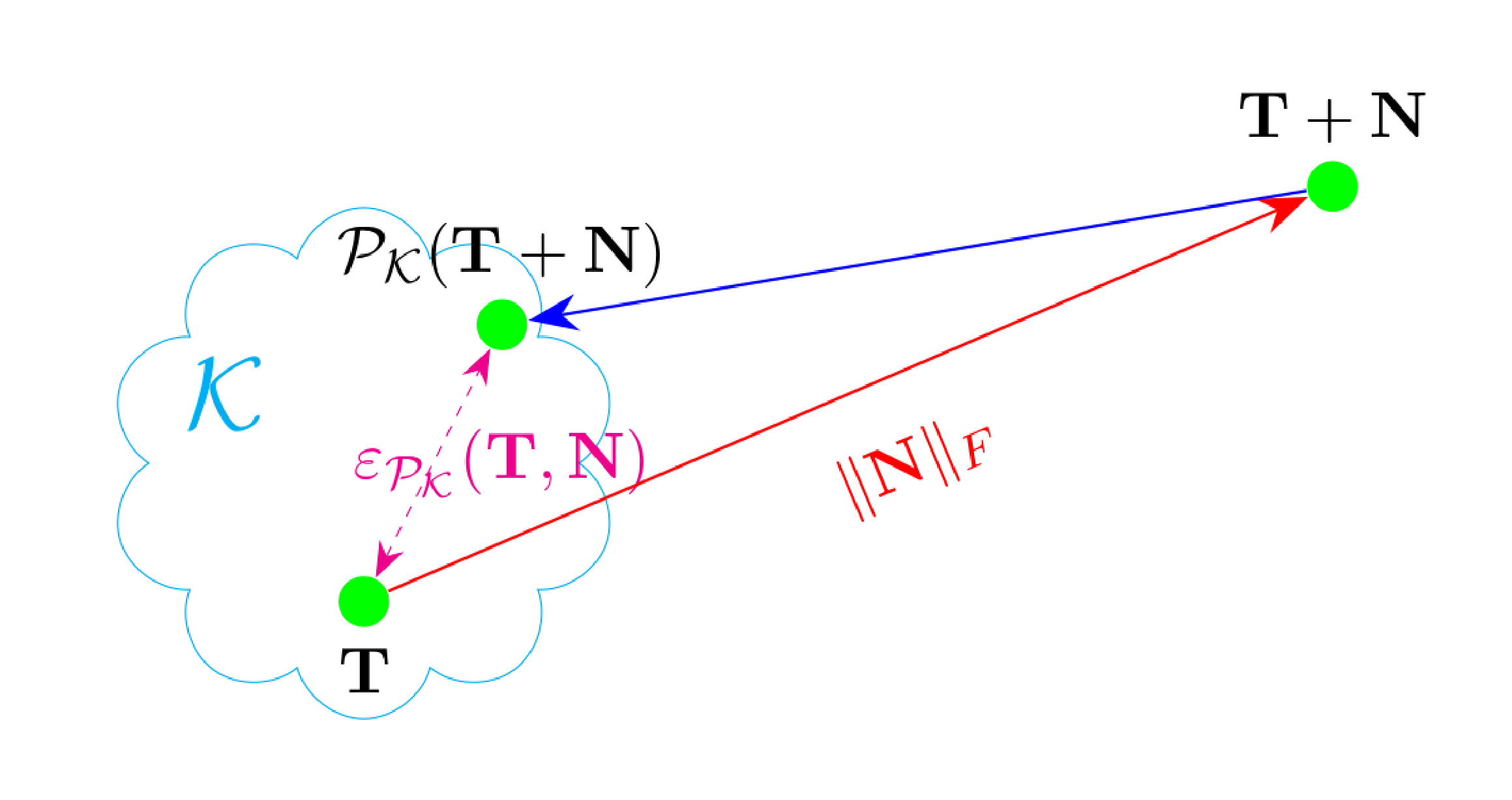}
	\end{center}
	\caption{Scheme of the proposed analysis}
	\label{fig:scheme}
\end{figure}

Assume ${\cal K}$ denotes the set of low-rank $d$-dimensional tensors 
\begin{equation}
{\cal K} = \{ {\bf T} \in \mathbb{R}^{m_1 \times m_2 \times \dots \times m_d}: \mbox{rank}({\bf T}) \leq R \}. \label{eq:set}
\end{equation}
The term $\mbox{rank}({\bf T})$ used above should correspond to one of the three low-rank tensor formats: canonical format, Tucker format or the Tensor-Train \cite{tt} format. Note that in all three cases, the set ${\cal K}$ satisfies the cone condition: 
\begin{equation}
\alpha \in \mathbb{R}, {\bf T} \in {\cal K} \Rightarrow \alpha {\bf T} \in {\cal K}. \label{eq:cone}
\end{equation}
Fix a low-rank tensor ${\bf T} \in {\cal K}$ and consider its perturbation ${\bf T + N}$, where each element of the noise tensor ${\bf N} \in \mathbb{R}^{m_1 \times m_2 \times \dots \times m_d}$ is an i.i.d. Gaussian random variable with mean $0$ and a predefined variance.

Consider an operator ${\cal P_K}: \mathbb{R}^{m_1 \times m_2 \times \dots \times m_d} \rightarrow {\cal K}$ with action defined by an approximation algorithm: in particular, ${\cal P_K}({\bf T + N})$ might denote the result of ALS \cite{als}, HOSVD \cite{hosvd} or TT-SVD \cite{tt} algorithm applied to ${\bf T + N}$. The main focus of this study is defined by a quantity
\begin{equation}
\label{eq:objective}
\varepsilon_{{\cal P_K}}({\bf T}, {\bf N}) := \| {\cal P_K}({\bf T} + {\bf N}) - {\bf T} \|_F.
\end{equation}
Figure \ref{fig:scheme} provides a graphical representation of the considered phenomena: empirical experiments show that commonly $\varepsilon_{{\cal P_K}}({\bf T}, {\bf N}) \ll \| {\bf N} \|_F$. Denote $M := \prod_{s = 1}^d m_s$ as the total number of tensor elements. The goal of this research is to provide a sharp theoretical bound for $\varepsilon_{{\cal P_K}}({\bf T}, {\bf N})$ that decreases with the growth of $d$ for fixed $M$. That would prove that high-dimensional low-rank tensor formats offer better de-noising properties compared to matrices.

\section{Gaussian width framework}

In order to establish a bound, we are going to utilize a framework that is commonly associated (but not fully equivalent to) with the concept of 'Gaussian width' \cite{widthbook}\cite{width}. The framework has minimal requirements for the considered set ${\cal K}$ in the form of the cone condition (\ref{eq:cone}). 

So far, no assumptions have been made about the approximation algorithm ${\cal P_K}$. Let us introduce the following hypothesis to express the algorithm approximation properties:
\begin{equation}
\label{eq:hypo}
\| {\cal P}_K({\bf \tilde{T}}) - ({\bf \tilde{T}}) \|_F \leq \| {\bf T} - {\bf \tilde{T}} \|_F = \| {\bf N} \|_F,
\end{equation}
where ${\bf \tilde{T} := T + H}$ is used for shortened notation. The proposed assumption means that ${\cal P}_K({\bf \tilde{T}}) \in {\cal K}$ is a better approximation to ${\bf \tilde{T}}$ than one particular element of ${\cal K}$. Thus, the hypothesis always holds true if the algorithm ${\cal P}_K$ returns an {\it optimal} approximation, but can still be considered even if ${\cal P}_K$ is suboptimal - which is true for most practically used tensor low-rank approximation algorithms. On the scheme on Figure \ref{fig:scheme}, the hypothesis (\ref{eq:hypo}) holds when the blue arrow is shorter than the right one.

Considering an arithmetic equality
\begin{eqnarray*}
	\| {\cal P}_{\cal K} ({\bf \tilde{T}}) - {\bf \tilde{T}} \|_F^2 
	& = & \| ({\cal P}_{\cal K}({\bf \tilde{T}}) - {\bf T}) - {\bf N} \|_F^2 \\
	& = & \| {\cal P}_{\cal K}({\bf \tilde{T}}) - {\bf T} \|_F^2 + \| {\bf N} \|_F^2 - 2 ({\cal P}_{\cal K}({\bf \tilde{T}}) - {\bf T}, {\bf N})_F,
\end{eqnarray*}
and applying the hypothesis (\ref{eq:hypo}), we can obtain
\begin{equation*}
\| {\cal P}_{\cal K}({\bf \tilde{T}}) - {\bf T} \|_F^2  \leq 2 \left({\cal P}_{\cal K}({\bf \tilde{T}}) - {\bf T}, {\bf N} \right)_F,
\end{equation*}
which, scaled by $\| {\cal P}_{\cal K}({\bf \tilde{T}}) - {\bf T} \|_F,$ gives  
\begin{equation*} 
\varepsilon_{{\cal P}_{\cal K}}({\bf T}, {\bf N}) = \| {\cal P}_{\cal K}({\bf \tilde{T}}) - {\bf T} \|_F 
\leq 2\left(\frac{{\cal P}_{\cal K}({\bf \tilde{T}}) - {\bf T}}{\| {\cal P}_{\cal K}({\bf \tilde{T}}) - {\bf T} \|_F}, {\bf N}\right)_F.
\end{equation*}
Let us denote the set 'self-subtraction' operation by 
\begin{equation*}
{\cal K} - {\cal K} = \{ {\bf A} - {\bf B}: {\bf A},{\bf B} \in {\cal K} \}.
\end{equation*}
Denoting ${\bf Z} = \frac{{\cal P}_{\cal K}({\bf \tilde{T}}) - {\bf T}}{\| {\cal P}_{\cal K}({\bf \tilde{T}}) - {\bf T} \|_F}$ and considering that $\bf Z$ satisfies ${\bf Z} \in {\cal K} - {\cal K}$, $\| {\bf Z} \|_F = 1$, we can bound 
$\varepsilon_{{\cal P}_{\cal K}}$ by 
\begin{equation} \label{eq:gauss}
\varepsilon_{{\cal P}_{\cal K}}({\bf T}, {\bf N}) \leq 2 ({\bf N}, {\bf Z})_F \leq 2 
\sup\limits_{ {\bf V} \in {\cal K} - {\cal K}, \\  \|{\bf V}\|_F = 1 } ({\bf N}, {\bf V})_F.
\end{equation}
Let us introduce a notation for the right side of~(\ref{eq:gauss}):
\begin{equation}
\label{eq:knorm}
\| {\bf N} \|_{\cal K} := \sup \limits_{{\bf V} \in {\cal K} - {\cal K}, \|{\bf V}\|_F = 1} ({\bf N}, {\bf V})_F.
\end{equation}
Note that $\| {\bf N} \|_{\cal K} \leq \| {\bf N} \|_F$, because 
$\| {\bf N} \|_F = \sup \limits_{\|{\bf V}\|_F = 1} ({\bf N}, {\bf V})$, 
where the optimization is done over a larger set compared to that in the definition of $\| {\bf N} \|_{\cal K}$. While positive-definiteness of $\| {\bf N} \|_{\cal K}$ may not hold for general ${\cal K}$, it can be seen that for the particular choice of ${\cal K}$ (\ref{eq:set}) the value $\| {\bf N} \|_{\cal K}$ is an actual norm. In case when ${\bf N}$ consists of random i.i.d. Gaussian variables, the value $\mathbb{E} \| {\bf N} \|_{\cal K}$ is commonly linked to the so-called notion of 'Gaussian width' \cite{widthbook}\cite{width} of set ${\cal K}$.

Note that $\| {\bf N} \|_{\cal K}$ is tightly linked with the quality of optimal approximation of ${\bf N}$ with an element of ${\cal K - K}$. An arithmetic relation can be checked directly in the form of
\begin{equation*}
\inf \limits_{{\bf Z} \in {\cal K - K}} \| {\bf N} - {\bf Z} \|_{F}^2 = \| {\bf N} \|_F^2 - \sup \limits_{{\bf V} \in {\cal K - K}, \| {\bf V} \|_F = 1} ({\bf N}, {\bf V})_F^2,
\end{equation*} 
which means that the smaller values of $\| {\bf N} \|_{\cal K}$ correspond to sets ${\cal K - K}$ that approximate the noise tensor ${\bf N}$ badly. Thus, the goal of the research can be seen as showing that the set of high-dimensional low-rank tensors is even {\it worse} at approximating white noise data, than the set of low-rank matrices.

However, studying {\it optimal} low-rank tensor approximations is highly challenging: even for certain particular tensors, like those arising in matrix multiplication complexity optimization, the {\it optimal} canonical approximations have not been found in decades. In this work, we will focus our theoretical study on the particular case of rank-one tensors (equivalent in any of considered definitions of 'rank')
\begin{equation*}
{\cal K} = \{ {\bf T} \in \mathbb{R}^{m_1 \times m_2 \times \dots \times m_d}: \mbox{rank}({\bf T}) \leq 1 \},
\end{equation*}
which is still nontrivial, since ${\cal K - K} = {\cal K + K}$ is then the set of tensors of canonical rank up to 2; the higher tensor ranks will then be studied numerically. It is clear that the 'equality' constraint in (\ref{eq:knorm}) can be replaced with the 'inequality' constraint of the form
\begin{equation}
\label{eq:2norm}
 \sup \limits_{\mbox{rank}({\bf V}) \leq 2, \|{\bf V}\|_F = 1} ({\bf N}, {\bf V})_F =  \sup \limits_{\mbox{rank}({\bf V}) \leq 2, \|{\bf V}\|_F \leq 1} ({\bf N}, {\bf V})_F.
\end{equation}
A seemingly appealing approach is utilizing the common Gaussian width properties, such as an explicit formula for the Gaussian width of a Minkowski sum of two bounded sets (which is equal to the sum of two respective Gaussian widths \cite{widthbook}), in order to reduce the rank bound down to simplest $1$ and work on a bound for
\begin{equation}
\label{eq:1norm}
\sup \limits_{\mbox{rank}({\bf V}) = 1, \|{\bf V}\|_F = 1} ({\bf N}, {\bf V})_F.
\end{equation}
However, we will not be able to follow that approach. While the bound for (\ref{eq:1norm}) is indeed {\it far} easier to establish as compared to (\ref{eq:2norm}), the set of $d \geq 3$ dimensional tensors
\begin{equation}
\label{eq:2set}
\{ {\bf V}: \mbox{rank}({\bf V}) \leq 2, \|{\bf V}\|_F \leq 1\}
\end{equation}
is just not a subset of a Minkowski sum of two identical sets
\begin{equation}
\label{eq:1set}
\{ {\bf V}: \mbox{rank}({\bf V}) = 1, \|{\bf V}\|_F \leq \Gamma \},
\end{equation}
no matter how large a constant $\Gamma > 0$ is, which follows from the non-closedness of the set of rank-$2$ canonical tensors. In other words, for arbitrary $\Gamma > 0$, a tensor ${\bf V}_{\Gamma}, \mbox{rank}({\bf V}_{\Gamma}) = 2, \| {\bf V}_{\Gamma} \|_F = 1$ exists such that it cannot be represented as a sum of two rank-one tensors with a norm smaller than $\Gamma$ \cite{silva}. 
	
Hence, the set of norm-bounded rank-two canonical tensors (\ref{eq:2set}) may not be represented as a sum of two identical norm-bounded sets of rank-one tensors, and we will not be able to use the common Gaussian width properties. The main technical section of the paper, that starts below, will provide a workaround for the necessity of considering rank-one components of exceedingly high norms.

\section{Bounds for the set of rank-two tensors}

Let us construct a bound for the particular value
\begin{equation}
\sup \limits_{\mbox{rank}({\bf V}) \leq 2, \|{\bf V}\|_F = 1} ({\bf N}, {\bf V})_F, \label{eq:target2}
\end{equation}	
where the term 'rank' is understood in the canonical sense. We will use the following extended notations for the canonical structure of ${\bf V}$:
\begin{align}
v & := \mbox{vec}({\bf V}), v = W \alpha =
\begin{bmatrix}
w_1 & w_2
\end{bmatrix} 
\alpha \label{eq:parrank2} \\
w_j & = \bigotimes_{s = 1}^d w_j^{(s)}, \| w_j^{(s)} \|_2 = 1, W^{(s)} = \begin{bmatrix}
w_1^{(s)} & w_2^{(s)}
\end{bmatrix}. \nonumber
\end{align}
In other words, we will assume that the columns of the canonical factor matrices $W^{(s)}$ are scaled to unit norm using additional coefficient vector $\alpha \in \mathbb{R}^2$.
Let us introduce some additional notations for the orthogonal bases of Kronecker product spans. Let ${\cal Q}_{2, \Omega}$ denote a set of orthogonal bases $Q \in \mathbb{R}^{\prod \limits_{s = 1}^d m_s \times 2}$, that correspond to at least one set of canonical factors $W^{(s)}$ with bounded condition numbers:

\begin{align}
W & = QR, Q^* Q = I_2, \label{eq:krorth} \\
W & = \begin{bmatrix}
w_1 & w_2
\end{bmatrix}, w_j = \bigotimes_{s = 1}^d w_j^{(s)}; \nonumber \\
W^{(s)} & = \begin{bmatrix}
w_1^{(s)} & w_2^{(s)}
\end{bmatrix}, \| w_j^{(s)} \|_2 = 1, \nonumber \\
\mbox{cond}_2 & (W) \leq \Omega. \label{eq:krcond}
\end{align} 

Let additionally ${\cal Q}_2 := \overline{{\cal Q}_{2, \infty}}$, where ${\cal Q}_{2, \infty}$ corresponds to the set of orthogonal bases from definition (\ref{eq:krorth}) without a condition number limit (\ref{eq:krcond}). As the set of rank-two canonical tensors is not closed in general, there can exist $\hat Q \in {\cal Q}_2$ such that $\hat Q \notin {\cal Q}_{2, \Omega}$ for any ${\Omega}$. Such an example is well-known for $d = 3, m_1 = m_2 = m_3 = 2$:
\begin{equation}
\hat Q = 
\begin{bmatrix}
1 & 0 \\
0 & \frac{1}{\sqrt{3}} \\
0 & 0 \\
0 & \frac{1}{\sqrt{3}} \\
0 & 0 \\
0 & 0 \\
0 & 0 \\
0 & \frac{1}{\sqrt{3}}
\end{bmatrix}. \label{eq:unboundcond}
\end{equation}

Now, we are ready to reformulate the target variable (\ref{eq:target2}) in terms of orthogonal projections with the following Lemma.
\begin{lemma}[Structured orthogonal projection]
	\label{lemma:widthtoq}
	\begin{equation*}
	\sup_{\mbox{rank}({\bf V}) \leq 2, \| {\bf V} \|_F = 1} ({\bf N}, {\bf V})_F = \max_{Q \in {\cal Q}_2} \| Q^* n \|_2, \mbox{where } n = \mbox{vec}({\bf N}).
	\end{equation*}
\end{lemma}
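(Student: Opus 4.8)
The plan is to reduce the supremum over rank-$\le 2$ tensors of unit Frobenius norm to an optimization over the orthogonal bases of the associated two-dimensional Kronecker-structured subspaces, and then to handle the closure carefully. First I would observe that for a fixed rank-$\le 2$ tensor ${\bf V}$ with canonical structure $v = \operatorname{vec}({\bf V}) = W\alpha$ as in~(\ref{eq:parrank2}), the vector $v$ lies in the column span of $W = \begin{bmatrix} w_1 & w_2 \end{bmatrix}$, a subspace of dimension at most $2$. Writing a (thin) QR factorization $W = QR$ with $Q^*Q = I_2$, every such $v$ with $\|v\|_2 = 1$ can be written $v = Q\beta$ with $\|\beta\|_2 = 1$, and conversely every such $Q\beta$ is a rank-$\le 2$ tensor of unit norm. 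Hence, for a fixed admissible $Q$,
\begin{equation*}
\sup_{\|\beta\|_2 = 1} ({\bf N}, Q\beta)_F = \sup_{\|\beta\|_2 = 1} (Q^* n, \beta)_2 = \|Q^* n\|_2 ,
\end{equation*}
so the inner maximization over $\beta$ is attained and yields exactly $\|Q^* n\|_2$. Taking the supremum over all admissible $Q$ then gives $\sup_{\operatorname{rank}({\bf V}) \le 2, \|{\bf V}\|_F = 1} ({\bf N}, {\bf V})_F = \sup_{Q \in {\cal Q}_{2,\infty}} \|Q^* n\|_2$.

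Next I would replace the supremum over ${\cal Q}_{2,\infty}$ by a maximum over its closure ${\cal Q}_2 = \overline{{\cal Q}_{2,\infty}}$. The function $Q \mapsto \|Q^* n\|_2$ is continuous (it is a composition of the linear map $Q \mapsto Q^* n$ and the Euclidean norm), so it extends continuously to the closure and $\sup_{Q \in {\cal Q}_{2,\infty}} \|Q^* n\|_2 = \sup_{Q \in {\cal Q}_2} \|Q^* n\|_2$. Since ${\cal Q}_2$ is a closed and bounded subset of $\mathbb{R}^{M \times 2}$ (bounded because $Q^*Q = I_2$ forces $\|Q\|_F^2 = 2$), it is compact, so the supremum is attained and can be written as a maximum. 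This is where the degenerate bases such as $\hat Q$ in~(\ref{eq:unboundcond}) enter: they belong to ${\cal Q}_2$ but not to any ${\cal Q}_{2,\Omega}$, yet they are covered automatically because we pass to the closure — and crucially, adding these limit points does not decrease the supremum (it can only possibly add new maximizers), so the equality with the original tensor supremum is preserved.

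The main obstacle I anticipate is the bookkeeping around the non-closedness: one must check that passing to the closure ${\cal Q}_2$ does not overshoot the tensor supremum, i.e.\ that no $Q \in {\cal Q}_2 \setminus {\cal Q}_{2,\infty}$ produces a value of $\|Q^* n\|_2$ strictly larger than what is already approached by genuine rank-$\le 2$ tensors. This follows from continuity (a limit of values $\|Q_k^* n\|_2$ with $Q_k \in {\cal Q}_{2,\infty}$ cannot exceed $\sup_{{\cal Q}_{2,\infty}}$), but it is the point that requires care, since a priori a degenerate limiting subspace could seem to "gain" alignment with $n$. A secondary technical point is to confirm that ${\cal Q}_{2,\infty}$ is exactly the set of orthonormal bases of two-dimensional subspaces spanned by two (possibly coincident, possibly collinear) rank-one Kronecker tensors, so that the first reduction is an honest equality rather than merely an inequality; degenerate cases where $w_1 = w_2$ (giving a rank-one, hence one-dimensional, span) are subsumed by allowing $Q$ to have a column that is irrelevant, or are handled in the closure, and should be noted explicitly.
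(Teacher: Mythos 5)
Your proof is correct and follows essentially the same route as the paper's: parametrize $\operatorname{vec}({\bf V}) = W\alpha = QR\alpha$, observe that the inner product becomes $(Q^*n, R\alpha)_2$ with $\|R\alpha\|_2 = 1$, take the inner supremum to get $\|Q^*n\|_2$, and then pass from $\sup_{{\cal Q}_{2,\infty}}$ to $\max_{{\cal Q}_2}$ via the closure. The paper states the last step without comment; you explicitly supply the continuity-and-compactness argument (and flag the rank-one degeneracy), which the paper's terse proof implicitly relies on but does not spell out.
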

\begin{proof}
	Recall that every rank-2 tensor has the parametrizaton $\mbox{vec}({\bf V}) = W \alpha$, thus
	\begin{align*}
	({\bf N}, {\bf V})_F & = (n, W \alpha)_2 = (Q^* n, R \alpha)_2; \\
	\| {\bf V} \|_F & = \| QR \alpha \|_2 = \| R \alpha \|_2.
	\end{align*}
	As ${\bf V}$ is fully described by $Q$ and $R \alpha$, we obtain
	\begin{equation*}
	\sup_{\mbox{rank}({\bf V}) \leq 2, \| {\bf V} \|_F = 1} ({\bf N}, {\bf V})_F = \sup_{Q \in {\cal Q}_{2, \infty}} \max_{\| R \alpha \|_2 = 1} (Q^* n, R \alpha)_2 = \sup_{Q \in {\cal Q}_{2, \infty}} \| Q^* n \|_2 = \max_{Q \in {\cal Q}_2}  \| Q^* n \|_2.
	\end{equation*}
\end{proof}

Let us now annotate the main theoretical result of the paper.
\begin{theorem}[Gaussian width bound for the set of unit rank-two tensors]
\label{th:main}
	Assume ${\bf N} \in \mathbb{R}^{m \times m \times \dots \times m}$ is a $d$-dimensional tensor, each element of which is a random i.i.d. variable distributed according to standard normal distribution. There exists an absolute constant $\mu$, such that
	\begin{equation*}
	\mathbb{P}\{\sup_{\mbox{rank}({\bf V}) \leq 2, \| {\bf V} \|_F = 1} ({\bf N}, {\bf V})_F > \mu \sqrt{m d^2 \ln m} + t \} \leq e^{-\frac{t^2}{4}} + 2 e^{- \frac{m^{d / 2}}{8}}.
	\end{equation*}
\end{theorem}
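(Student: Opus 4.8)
The plan is to reduce to the rank-one spectral norm of the noise, which is easy to control. By Lemma~\ref{lemma:widthtoq}, writing $n = \mbox{vec}({\bf N})$, the quantity in question equals $\Phi := \max_{Q \in {\cal Q}_2} \| Q^* n \|_2$. Since every admissible $Q$ has orthonormal columns, $\Phi$ is a supremum of linear functionals $n \mapsto (Qc, n)_2$ over unit vectors $Qc$, hence $1$-Lipschitz in $n$; Gaussian (Borell--TIS) concentration gives $\mathbb{P}(\Phi > \mathbb{E}\Phi + t) \leq e^{-t^2/2} \leq e^{-t^2/4}$, so the task is to bound $\mathbb{E}\Phi$ by an absolute multiple of $\sqrt{m d^2 \ln m}$. (Along this route the $2 e^{-m^{d/2}/8}$ term is superfluous; it appears if one instead estimates $\Phi$ by an $\varepsilon$-net of ${\cal Q}_2$ on the event $\| n \|_2 \leq 2 m^{d/2}$, whose complement has $\chi^2_{m^d}$-probability at most $e^{-m^{d/2}/8}$, the net being necessarily fine enough to beat $\| {\bf N} \|_F \approx m^{d/2}$, which is what produces $\ln(\mbox{net size}) = O(d^2 m \ln m)$.)

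The core step is a deterministic inequality $\Phi \leq C_0 \sqrt{d}\, \| {\bf N} \|_\star$, where $\| {\bf N} \|_\star := \sup_{\| u^{(s)} \|_2 = 1} |(\bigotimes_{s=1}^{d} u^{(s)}, n)_2|$ is the rank-one spectral norm of ${\bf N}$. Given $Q \in {\cal Q}_{2,\infty}$ generated by $w_j = \bigotimes_s w_j^{(s)}$, I would replace its columns by $q_1 := w_1$ and $q_2 :=$ (unit component of $w_2$ orthogonal to $w_1$), which span the same plane, so $\| Q^* n \|_2^2 = (w_1, n)_2^2 + (q_2, n)_2^2 \leq \| {\bf N} \|_\star^2 + (q_2, n)_2^2$. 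If the angle between $w_1$ and $w_2$ is at least $1/\sqrt{d}$, then $q_2$ is a combination of the rank-one unit tensors $w_1, w_2$ with $\ell_1$-coefficient mass $O(\sqrt{d})$; if the angle is smaller, writing $w_2^{(s)} = c_s w_1^{(s)} + \gamma_s u_s$ with $u_s \perp w_1^{(s)}$ exhibits $w_2 - (w_1, w_2)_2\, w_1$ as an explicit combination of the $2^d$ rank-one tensors gotten by substituting $u_s$ for $w_1^{(s)}$ on subsets of the modes, and --- using that $\sum_s \gamma_s^2$ is then small while the angle is of order $(\sum_s \gamma_s^2)^{1/2}$ --- the $\ell_1$-mass of the coefficients of $q_2$ in this family is again $O(\sqrt{d})$. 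Either way $|(q_2, n)_2| \leq O(\sqrt{d})\, \| {\bf N} \|_\star$, hence $\| Q^* n \|_2 \leq C_0 \sqrt{d}\, \| {\bf N} \|_\star$ for every $Q \in {\cal Q}_{2,\infty}$; since $Q \mapsto \| Q^* n \|_2$ is continuous on the compact Stiefel manifold, this inequality passes to $\Phi = \max_{Q \in {\cal Q}_2} \| Q^* n \|_2$.

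It then remains to note $\mathbb{E}\| {\bf N} \|_\star = O(\sqrt{m d \ln m})$ (in the regime $m \geq d$), the standard covering-number bound for the supremum of the Gaussian process $(u^{(1)}, \dots, u^{(d)}) \mapsto (\bigotimes_s u^{(s)}, n)_2$ on $(S^{m-1})^{d}$: a $(2d)^{-1}$-net of $(S^{m-1})^d$ has at most $(6d)^{md}$ points, the Kronecker product telescopes into $d$ rank-one tensors, whence $\| {\bf N} \|_\star \leq 2 \max_{\mbox{net}} |(\bigotimes_s \tilde u^{(s)}, n)_2|$, and the expected maximum of $(6d)^{md}$ standard normals is $O(\sqrt{m d \ln m})$. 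Therefore $\mathbb{E}\Phi \leq C_0 \sqrt{d}\, \mathbb{E}\| {\bf N} \|_\star \leq \mu \sqrt{m d^2 \ln m}$, which with the first paragraph gives the statement (the asserted tail $e^{-t^2/4} + 2 e^{-m^{d/2}/8}$ being, a fortiori, weaker than what this argument yields).

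The step I expect to be the genuine obstacle is establishing $\Phi \leq C_0 \sqrt{d}\, \| {\bf N} \|_\star$ for the degenerate bases in ${\cal Q}_2 = \overline{{\cal Q}_{2,\infty}}$ --- those, like~(\ref{eq:unboundcond}), that are attained only as $\mbox{cond}_2(W) \to \infty$. This is precisely where the rank-two set fails to be closed, so a direct net over canonical factors never reaches the boundary; the way around is to see that a nearly-parallel pair $(w_1, w_2)$ confines the extra basis direction $q_2$ to a bounded-$\ell_1$ span of $O(2^d)$ rank-one tensors --- its directional-derivative structure --- so that the second basis vector costs only the factor $O(\sqrt{d})$, after which continuity of $\| Q^* n \|_2$ hands the estimate to the closure at no cost.
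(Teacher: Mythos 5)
Your route is genuinely different from the paper's and in several respects cleaner. The paper works directly with the set ${\cal Q}_2$: it first proves a quantitative density result (Theorem~\ref{th:condappr}) showing every $Q\in{\cal Q}_2$ is within $\mu_1/\Omega^{\mu_2}$ of some $\tilde Q\in{\cal Q}_{2,\Omega}$, then covers ${\cal Q}_{2,\Omega}$ by a grid of cardinality $(\mu\Omega^2/\zeta)^{2md}$ (Lemma~\ref{lemma:embedding}), concentrates $\|n\|_2$ via Laurent--Massart, and runs a three-way balancing of $\Omega$, $\zeta$, ${\cal G}$. You instead establish a deterministic pointwise inequality $\|Q^*n\|_2\le C_0\sqrt{d}\,\|{\bf N}\|_\star$ on the dense subset ${\cal Q}_{2,\infty}$, pass to $\max_{{\cal Q}_2}$ by continuity of $Q\mapsto\|Q^*n\|_2$ (so that the degenerate bases like~(\ref{eq:unboundcond}) cost nothing, neatly sidestepping Theorem~\ref{th:condappr}), estimate $\mathbb{E}\|{\bf N}\|_\star$ by a single net on $(S^{m-1})^d$, and close with Borell--TIS, making the $\|n\|_2$-concentration term superfluous. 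Your two-case argument for the $O(\sqrt{d})$ factor checks out: when $|\sin\theta|<1/\sqrt d$ one indeed has $\gamma_s^2<1/d$ for all $s$, hence $\sum_s\gamma_s^2\le -\ln(1-1/d)\le 2/d$ and $\sum_s|\gamma_s|\le\sqrt 2$, so $\prod_s(1+|\gamma_s|)-1\lesssim\sum_s|\gamma_s|\le\sqrt d\bigl(\sum_s\gamma_s^2\bigr)^{1/2}\lesssim\sqrt d\,|\sin\theta|$, which is the needed $\ell_1$-bound.

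There is, however, a genuine gap in the final estimate, and you flag it yourself. Your covering of $(S^{m-1})^d$ by a $(2d)^{-1}$-net gives cardinality $(6d)^{md}$ and hence $\mathbb{E}\|{\bf N}\|_\star = O\bigl(\sqrt{md\ln d}\bigr)$, so $\mathbb{E}\Phi = O\bigl(\sqrt{md^2\ln d}\bigr)$. The theorem asserts $\mu\sqrt{md^2\ln m}$. These match only when $\ln(6d)=O(\ln m)$, i.e.\ essentially $d\lesssim m$; for $m$ fixed and $d$ large (e.g.\ $m=2$, $d\to\infty$ — precisely the quantized/``tensorized'' regime the paper cares about) your bound carries an extra $\sqrt{\ln d}$ factor not present in the statement. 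The paper's $\ln m$ comes from a different source: it chooses $\Omega\sim m^{d/(2\mu_2)}$ to beat the projector error, so $\ln\chi\sim md\cdot d\ln m$, and no $\ln d$ appears. So as written your proof proves a correct theorem, but a different one, weaker when $d>m$. The patch is known and would in fact improve on the paper: the injective norm of an i.i.d.\ Gaussian tensor satisfies $\mathbb{E}\|{\bf N}\|_\star=O(\sqrt{md})$ with no logarithm (chaining or a Slepian-type argument rather than a crude union bound over a product net), which would give $\mathbb{E}\Phi=O(\sqrt{md^2})\le\mu\sqrt{md^2\ln m}$ for all $m\ge 2,d\ge 2$. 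If you invoke that sharper estimate (or, alternatively, find an argument replacing $\ln d$ by $\ln m$ directly in your covering step), the rest of the proposal is sound.
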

Before diving into the details, let us first provide a plan/sketch of the proof of the Theorem \ref{th:main}. 
\begin{itemize}
	\item In order to directly utilize the fact that elements of ${\bf N}$ are i.i.d. Gaussian, we will aim to create a finite set $Q_1, Q_2, \dots Q_{\chi} \in {\cal Q}_{2, \infty}$, such that
	\begin{equation*}
	\max_{Q \in {\cal Q}_2}  \| Q^* n \|_2 \approx \max_{k = 1 \dots \chi}  \| Q_k^* n \|_2.
	\end{equation*}
	Then, since each $Q_k$ has orthogonal columns, that do not depend on $n$, we can utilize the fact that the two elements of the vector $Q_k^* n$ have a standard Gaussian distribution, and apply union bounds.
	\item In order to build a finite set $Q_1, Q_2, \dots Q_{\chi} \in {\cal Q}_{2, \infty}$, we are going to define $\epsilon$-grids over the unit spheres of the corresponding non-orthogonal factors $w_{1,2}^{(s)}$: the number of elements sufficient to cover a unit sphere with an $\epsilon$-grid is well-known. Since we are only interested in the orthogonal bases, the coefficients $\alpha$ can be omitted from consideration.
	\item Certain orthogonal bases $Q \in {\cal Q}_2$ can only be attained at a set of canonical factors $W, W^{(s)}$ with large/unbounded condition numbers (see example (\ref{eq:unboundcond}), \cite{silva}). These orthogonal bases are harder to approximate with an $\epsilon$-grid over the factors $w_{1,2}^{(s)}$, since $\epsilon$-perturbations of ill-conditioned factors may lead to arbitrarily large perturbations of the corresponding orthogonal subspace. In order to resolve the issue, a theorem is required that guarantees that considering $Q \in {\cal Q}_{2, \Omega}$ for some limited condition number bound $\Omega$ is sufficient for our purposes. 
\end{itemize}

Let us now begin the analysis with some technical lemmas. Firstly, let us provide a tool for handling cosines between perturbed pairs of vectors.
\begin{lemma}[Approximate cosine] \label{lemma:acosine}
Let $x, y, \hat x, \hat y \in \mathbb{R}^{m}$, and $\| x - \hat x \|_2 \leq \eta_x \| x \|_2, \| y - \hat y \|_2 \leq \eta_y \| y \|_2$. Then,
\begin{equation*}
\lvert \frac{(x, y)_2}{\| x \|_2 \| y \|_2} - \frac{(\hat x, \hat y)_2}{\| \hat x \|_2 \| \hat y \|_2} \rvert \leq 2 \eta_x + 2 \eta_y.
\end{equation*}
\end{lemma}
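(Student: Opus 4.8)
The plan is to exploit the fact that a cosine is invariant under rescaling each argument. Since both sides of the claimed inequality are unchanged if we replace $x$ by $\lambda x$ and $\hat x$ by $\lambda \hat x$ (and likewise for $y, \hat y$), I would first reduce to the case $\| x \|_2 = \| y \|_2 = 1$. Then the hypotheses become simply $\| x - \hat x \|_2 \leq \eta_x$ and $\| y - \hat y \|_2 \leq \eta_y$, and the two cosines to be compared are $(x, y)_2$ and $(\hat u, \hat v)_2$, where $\hat u = \hat x / \| \hat x \|_2$ and $\hat v = \hat y / \| \hat y \|_2$.

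The next step is an elementary normalization-perturbation estimate: for any nonzero $a, b \in \mathbb{R}^m$,
\[
\left\| \frac{a}{\| a \|_2} - \frac{b}{\| b \|_2} \right\|_2 \leq \frac{2 \| a - b \|_2}{\| a \|_2}.
\]
This follows from the decomposition $\frac{a}{\| a \|_2} - \frac{b}{\| b \|_2} = \frac{a - b}{\| a \|_2} + b \cdot \frac{\| b \|_2 - \| a \|_2}{\| a \|_2 \| b \|_2}$ together with the reverse triangle inequality $\lvert \| b \|_2 - \| a \|_2 \rvert \leq \| a - b \|_2$. Applying this with $(a, b) = (x, \hat x)$ and with $(a, b) = (y, \hat y)$, and recalling $\| x \|_2 = \| y \|_2 = 1$, gives $\| x - \hat u \|_2 \leq 2 \eta_x$ and $\| y - \hat v \|_2 \leq 2 \eta_y$. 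Finally I would write $(x, y)_2 - (\hat u, \hat v)_2 = (x - \hat u, y)_2 + (\hat u, y - \hat v)_2$ and bound each term by Cauchy--Schwarz, using $\| y \|_2 = 1$ and $\| \hat u \|_2 = 1$; this yields $\lvert (x, y)_2 - (\hat u, \hat v)_2 \rvert \leq \| x - \hat u \|_2 + \| y - \hat v \|_2 \leq 2 \eta_x + 2 \eta_y$, which is exactly the claim.

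There is no serious obstacle in this lemma; it is purely a routine perturbation estimate. The only points that need a little care are: making sure the reduction to unit-norm $x, y$ leaves the hypotheses in the stated form, i.e. that the factors $\eta_x, \eta_y$ multiply the norms of the \emph{unperturbed} vectors, which is precisely what makes $\| a \|_2$ (rather than $\| b \|_2$) appear in the denominator of the normalization estimate and keeps the final constant at $2$; and noting that the degenerate cases $x = 0$ or $y = 0$ are excluded since the cosine is then undefined. A slightly tighter constant is available with more work, but the value $2\eta_x + 2\eta_y$ is all that is needed downstream, where the lemma will be used to control how an $\epsilon$-perturbation of the canonical factors $w_{1,2}^{(s)}$ propagates to the angles between the corresponding Kronecker-product vectors.
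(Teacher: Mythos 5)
Your proof is correct and follows essentially the same route as the paper's: split the difference of cosines into two one-variable perturbations, reduce each to the distance between the corresponding unit-normalized vectors, and bound that distance by $2\eta$ via the decomposition $\frac{a}{\|a\|_2}-\frac{b}{\|b\|_2}=\frac{a-b}{\|a\|_2}+b\bigl(\frac{1}{\|a\|_2}-\frac{1}{\|b\|_2}\bigr)$ together with the reverse triangle inequality. The preliminary rescaling to $\|x\|_2=\|y\|_2=1$ is a harmless cosmetic streamlining; the substance is identical.
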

\begin{proof}
\begin{align*}
\lvert \frac{(x, y)_2}{\| x \|_2 \| y \|_2} - \frac{(\hat x, \hat y)_2}{\| \hat x \|_2 \| \hat y \|_2} \rvert & \leq \lvert \frac{(x, y)_2}{\| x \|_2 \| y \|_2} - \frac{(\hat x, y)_2}{\| \hat x \|_2 \| y \|_2} \rvert + \lvert \frac{(\hat x, y)_2}{\| \hat x \|_2 \| y \|_2} - \frac{(\hat x, \hat y)_2}{\| \hat x \|_2 \| \hat y \|_2} \rvert \\
& \leq \|\frac{x}{\| x \|} - \frac{\hat x}{\| \hat x \|} \|_2 + \|\frac{y}{\| y \|} - \frac{\hat y}{\| \hat y \|_2} \|_2.
\end{align*}
The two similar terms can then be bound by
\begin{align*}
\| \frac{x}{\| x \|_2} - \frac{\hat x}{\| \hat x \|_2} \|_2 & = \| \frac{x - \hat x}{\| x \|_2} +  \hat x ( \frac{1}{\| x \|_2} - \frac{1}{\| \hat x \|_2} ) \|_2 \\
& \leq \frac{\| x - \hat x \|_2}{\| x \|_2} + \frac{ \lvert \| \hat x \|_2 - \| x \|_2 \rvert }{ \| x \|_2} \leq 2 \eta_x; \\
\| \frac{y}{\| y \|_2} - \frac{\hat y}{\| \hat y \|_2} \|_2 & \leq \mbox{\{analogously\}} \leq 2 \eta_y.
\end{align*}
\end{proof}

Secondly, let us prove that the condition number of the full Kronecker-product base $W$ cannot exceed the condition number of per-dimension factor matrices $W^{(s)}$.
\begin{lemma} \label{lemma:cond}
For each dimension $s$, $\mbox{cond}_2(W^{(s)}) \geq \mbox{cond}_2(W)$.
\end{lemma}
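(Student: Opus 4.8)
The plan is to reduce the whole statement to the single scalar that governs the conditioning of a two‑column matrix with unit‑norm columns. First I would note that since $\| w_j^{(s)} \|_2 = 1$ for every $s$, the Kronecker columns $w_j = \bigotimes_{s=1}^d w_j^{(s)}$ satisfy $\| w_j \|_2 = \prod_{s=1}^d \| w_j^{(s)} \|_2 = 1$ as well, so both $W$ and each $W^{(s)}$ are matrices with exactly two columns, each of unit Euclidean norm. For any such matrix $A = \begin{bmatrix} a_1 & a_2 \end{bmatrix}$ with $\|a_1\|_2 = \|a_2\|_2 = 1$, the Gram matrix is $A^*A = \begin{bmatrix} 1 & c \\ c & 1 \end{bmatrix}$ with $c = (a_1, a_2)_2 \in \mathbb{R}$, whose eigenvalues are $1 + c$ and $1 - c$. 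Hence $\sigma_{\max}(A)^2 = 1 + |c|$, $\sigma_{\min}(A)^2 = 1 - |c|$, and
\[
\mbox{cond}_2(A)^2 = \frac{1 + |c|}{1 - |c|},
\]
which is a strictly increasing function of $|c|$ on $[0,1)$ and equals $+\infty$ when $|c| = 1$.

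Next I would use the standard Kronecker‑product identity $\bigl(\bigotimes_s a_s, \bigotimes_s b_s\bigr)_2 = \prod_s (a_s, b_s)_2$, which gives $(w_1, w_2)_2 = \prod_{s=1}^d (w_1^{(s)}, w_2^{(s)})_2$. Writing $c := (w_1, w_2)_2$ and $c_s := (w_1^{(s)}, w_2^{(s)})_2$, the Cauchy–Schwarz inequality together with the unit‑norm normalization yields $|c_s| \leq 1$ for every $s$, so for any fixed dimension index $s_0$,
\[
|c| = \prod_{s=1}^d |c_s| \leq |c_{s_0}|.
\]

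Finally I would combine the two observations: applying the monotonicity of $x \mapsto \frac{1+x}{1-x}$ to the inequality $|c| \leq |c_{s_0}|$ gives $\mbox{cond}_2(W)^2 \leq \mbox{cond}_2(W^{(s_0)})^2$, and taking square roots yields the claim. The degenerate cases are free: if $|c_{s_0}| = 1$ then $\mbox{cond}_2(W^{(s_0)}) = +\infty$ and the inequality is vacuous, while if $|c| = 1$ then every $|c_s| = 1$ and both sides are infinite. I do not expect a genuine obstacle here; the only point requiring a little care is to phrase the singular‑value computation and the monotonicity so that they remain valid up to the boundary $|c| \to 1$, rather than tacitly assuming the factors are strictly well‑conditioned.
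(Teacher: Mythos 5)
Your proof is correct and follows essentially the same route as the paper: reduce the condition number of a two‑column, unit‑column matrix to the off‑diagonal Gram entry $|c|$ (with singular values $\sqrt{1\pm|c|}$), use the Kronecker identity to factor $(w_1,w_2)_2 = \prod_s (w_1^{(s)}, w_2^{(s)})_2$, and bound $|c|$ by any single factor $|c_s|\le 1$. You are slightly more explicit than the paper in writing out $\mbox{cond}_2^2 = \frac{1+|c|}{1-|c|}$ and invoking monotonicity, and in handling the degenerate $|c|\to 1$ boundary, but the substance is identical.
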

\begin{proof}
Taking into account that $\|\ w_j^{(s)} \|_2 = 1$, it can be seen that both $W^* W$ and $(W^{(s)})^* W$ have a similar form of
\begin{equation*}
W^* W = \begin{bmatrix}
1 & w_1^* w_2 \\
w_2^* w_1 & 1 \\
\end{bmatrix}, 
(W^{(s)})^* W = \begin{bmatrix}
	1 & (w_1^{(s)})^* w_2^{(s)} \\
	(w_2^{(s)})^* w_1^{(s)} & 1 \\
\end{bmatrix}
\end{equation*}
It suffices to see that the eigenvalues of a matrix $\begin{bmatrix} 1 & \gamma \\ \bar \gamma & 1 \end{bmatrix}$ are expressed as $\lambda_{1,2} = 1 \pm \lvert \gamma \rvert$, and that 
\begin{equation*}
\lvert w_1^* w_2 \rvert = \lvert \prod_s (w_1^{(s)})^* w_2^{(s)} \rvert \leq \lvert (w_1^{(s)})^* w_2^{(s)} \rvert.
\end{equation*}
\end{proof}

Finally, let us provide a useful representation for an SVD of a two-column matrix with unit column norms.
\begin{lemma}
If $C = \begin{bmatrix} c_1 & c_2 \end{bmatrix} \in \mathbb{R}^{m \times 2}$ is a matrix with norm-one columns and 
\begin{equation*}
\| c_1 + c_2 \|_2 \geq \| c_1 - c_2 \|_2,
\end{equation*} 
then its short-form SVD is given by
\begin{equation*}
C = \begin{bmatrix} 
\frac{c_1 + c_2}{\| c_1 + c_2 \|_2} & \frac{c_1 - c_2}{\| c_1 - c_2 \|_2}
\end{bmatrix}
\begin{bmatrix} 
\frac{1}{\sqrt{2}}\| c_1 + c_2 \|_2 & 0 \\
0 & \frac{1}{\sqrt{2}}\| c_1 - c_2 \|_2
\end{bmatrix}
\begin{bmatrix}
\frac{1}{\sqrt{2}} & \frac{1}{\sqrt{2}} \\
\frac{1}{\sqrt{2}} & - \frac{1}{\sqrt{2}}
\end{bmatrix}.
\end{equation*}
\end{lemma}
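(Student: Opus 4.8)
The plan is simply to exhibit the three factors on the right-hand side and verify directly that they constitute a valid short-form SVD of $C$: the left factor must have orthonormal columns, the middle factor must be diagonal with nonincreasing nonnegative entries, the right factor must be orthogonal, and the product must equal $C$. The only genuinely nontrivial observation is that the sum and the difference of two \emph{unit} vectors are automatically orthogonal; everything else is bookkeeping.

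First I would record that orthogonality: since $\| c_1 \|_2 = \| c_2 \|_2 = 1$,
\begin{equation*}
(c_1 + c_2, c_1 - c_2)_2 = \| c_1 \|_2^2 - \| c_2 \|_2^2 = 0 .
\end{equation*}
Hence the two columns of the claimed left factor, being the unit normalizations of $c_1 + c_2$ and $c_1 - c_2$, are orthonormal. The claimed right factor, call it $V = \frac{1}{\sqrt{2}} \begin{bmatrix} 1 & 1 \\ 1 & -1 \end{bmatrix}$, satisfies $V^* V = V V^* = I_2$ by a one-line $2 \times 2$ check, and it is symmetric, so $V^* = V$. The middle factor is diagonal with entries $\frac{1}{\sqrt{2}} \| c_1 + c_2 \|_2$ and $\frac{1}{\sqrt{2}} \| c_1 - c_2 \|_2$, both nonnegative, and the first is at least as large as the second precisely by the hypothesis $\| c_1 + c_2 \|_2 \geq \| c_1 - c_2 \|_2$; thus it is a legitimate singular-value matrix.

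It then remains to multiply the three factors. Forming the product of the left and middle factors, the two norms cancel and one is left with $\frac{1}{\sqrt{2}} \begin{bmatrix} c_1 + c_2 & c_1 - c_2 \end{bmatrix}$; right-multiplying by $V = V^*$ gives
\begin{equation*}
\frac{1}{2} \begin{bmatrix} (c_1 + c_2) + (c_1 - c_2) & (c_1 + c_2) - (c_1 - c_2) \end{bmatrix} = \begin{bmatrix} c_1 & c_2 \end{bmatrix} = C ,
\end{equation*}
which completes the verification. (Equivalently, the same $V$ and the squared singular values $\frac{1}{2}\| c_1 \pm c_2 \|_2^2 = 1 \pm c_1^* c_2$ can be read off from the eigendecomposition of the Gram matrix $C^* C$, which has ones on the diagonal and $c_1^* c_2$ off-diagonal.)

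There is essentially no obstacle here — the statement is a short computation — so the only point needing a word of care is the degenerate case $\| c_1 - c_2 \|_2 = 0$, i.e. $c_1 = c_2$, in which $C$ has rank one: there the column $\frac{c_1 - c_2}{\| c_1 - c_2 \|_2}$ should be read as an arbitrary unit vector orthogonal to $\frac{c_1 + c_2}{\| c_1 + c_2 \|_2}$, the second singular value is $0$, and the identity still holds; note also that the hypothesis $\| c_1 + c_2 \|_2 \geq \| c_1 - c_2 \|_2$ rules out $c_1 = -c_2$, so $\| c_1 + c_2 \|_2 > 0$ and the first left singular vector is always well defined. It is likewise worth double-checking that the ordering of the diagonal of the middle factor agrees with the convention $\sigma_1 \geq \sigma_2$, which is exactly the reason the hypothesis is imposed in this orientation.
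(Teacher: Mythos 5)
Your proof is correct and follows exactly the same route as the paper's: observe that $(c_1+c_2,\,c_1-c_2)_2 = \|c_1\|_2^2 - \|c_2\|_2^2 = 0$ so the left factor has orthonormal columns, note the right factor is orthogonal, and verify the product by direct multiplication. You add some useful extra care (checking the singular-value ordering matches the hypothesis and treating the degenerate $c_1=\pm c_2$ cases) that the paper omits, but the substance is identical.
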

\begin{proof}
The equality is verified by direct multiplication. It suffices to verify the orthogonality of the side matrices. To show orthogonality of the two columns of the first multiple, we can use
\begin{equation*}
(c_1 + c_2, c_1 - c_2) = \| c_1 \|_2^2 - \| c_2 \|_2^2 = 1 - 1 = 0.
\end{equation*}
\end{proof}
By scaling the provided SVD formula, a useful representation can be obtained:
\begin{equation*}
C = \frac{1}{2} \| c_1 + c_2 \|_2 \begin{bmatrix} 
\frac{c_1 + c_2}{\| c_1 + c_2 \|_2} & \frac{c_1 - c_2}{\| c_1 - c_2 \|_2}
\end{bmatrix}
\begin{bmatrix}
1 & 1 \\
\frac{\| c_1 - c_2 \|_2}{\| c_1 + c_2 \|_2} & -\frac{\| c_1 - c_2 \|_2}{\| c_1 + c_2 \|_2}\\
\end{bmatrix}
\end{equation*}
If we introduce additional notations $\alpha := \frac{\| c_1 - c_2 \|_2}{\| c_1 + c_2 \|_2}$, $a := \frac{c_1 + c_2}{\| c_1 + c_2 \|_2}, b := \frac{c_1 - c_2}{\| c_1 - c_2 \|_2}$, and recall that columns of $C$ are of unit norm, we can replace the scalar coefficient with individual column scaling, which gives a useful parametrization
\begin{align}
c_1 & = \frac{a + \alpha b}{\sqrt{1 + \alpha^2}}, \nonumber \\
c_2 & = \frac{a - \alpha b}{\sqrt{1 + \alpha^2}}, \label{eq:parrank2norm} \\
\mbox{cond}_2(C) & = \frac{1}{\alpha}. \nonumber
\end{align}

We are now ready to establish a theorem that guarantees that ${\cal Q}_{2, \Omega} \subset {\cal Q}_2$ is a rather dense inclusion: each element of ${\cal Q}_2$ can be approximated by an element of ${\cal Q}_{2, \Omega}$ with a uniform accuracy bound. Note that this result is the current bottleneck of the analysis: if similar result is established for higher tensor ranks, the rest of the analysis will follow. However, our proof method cannot be directly generalized towards higher ranks, since it utilizes explicit size-2 SVD decomposition formulas.
\begin{theorem}
\label{th:condappr}
For any $Q \in {\cal Q}_2$ and $\Omega \geq d$ there exists $\tilde Q \in {\cal Q}_{2, \Omega}$ such that
\begin{equation*}
\| Q Q^* - \tilde Q \tilde Q^* \|_2 \leq \delta_{\Omega},
\end{equation*}
where $\delta_{\Omega} = \frac{\mu_1}{\Omega^{\mu_2}}$, and $\mu_1, \mu_2 > 0$ are absolute constants (that do not depend on $Q$, $\Omega$, $d$).
\end{theorem}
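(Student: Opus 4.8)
The strategy is to reduce the claim to a perturbation estimate stating that a rank-two Kronecker-structured span is essentially unchanged when the per-mode ill-conditioning parameters are rescaled by a common factor, while such a rescaling lowers the condition number of the whole tensor. Since ${\cal Q}_2 = \overline{{\cal Q}_{2,\infty}}$, I would first pick $Q' \in {\cal Q}_{2,\infty}$ with $\| QQ^* - Q'Q'^* \|_2 \le \Omega^{-2}$, realized by a matrix $\hat W$ with columns $\hat w_1 = \bigotimes_s \hat w_1^{(s)}$, $\hat w_2 = \bigotimes_s \hat w_2^{(s)}$ of unit-norm Kronecker factors. If this realization already has $\mbox{cond}_2(\hat W) \le \Omega$ there is nothing to prove, so assume $\mbox{cond}_2(\hat W) > \Omega$. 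Flipping the sign of $\hat w_2^{(s)}$ on a suitable index set alters $\hat w_2$ by at most a global sign and hence leaves $\mbox{span}(\hat W)$ and $Q'Q'^*$ unchanged, so we may assume each pair $\hat w_1^{(s)}, \hat w_2^{(s)}$ has nonnegative inner product and apply representation (\ref{eq:parrank2norm}) mode-wise: $\hat w_j^{(s)} = (a^{(s)} \pm \alpha_s b^{(s)})/\sqrt{1+\alpha_s^2}$ with $(a^{(s)}, b^{(s)})$ orthonormal and $\mbox{cond}_2(\hat W^{(s)}) = 1/\alpha_s$. By Lemma~\ref{lemma:cond}, $\mbox{cond}_2(\hat W^{(s)}) \ge \mbox{cond}_2(\hat W) > \Omega$, so $\alpha_s < 1/\Omega$ for every $s$ and $t'^2 := \sum_s \alpha_s^2 < d/\Omega^2 \le 1$; this is the only place the assumption $\Omega \ge d$ is used, and it is exactly what keeps all subsequent estimates uniform in $d$.

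Next I would expand both Kronecker products over subsets of modes. Writing $u_S$ for the Kronecker product whose $s$-th factor equals $b^{(s)}$ if $s\in S$ and $a^{(s)}$ otherwise (an orthonormal system), the size-$2$ SVD formula applied to $\hat W$ shows that the normalizations of $P_+ := \sum_{|S|\ \mbox{even}}(\prod_{s\in S}\alpha_s)\,u_S$ and $P_- := \sum_{|S|\ \mbox{odd}}(\prod_{s\in S}\alpha_s)\,u_S$ form an orthonormal basis of $\mbox{span}(Q')$, while $\mbox{cond}_2(\hat W)^{-2} = \big(\sum_{|S|\ \mbox{odd}}\prod_{s\in S}\alpha_s^2\big)\big(\sum_{|S|\ \mbox{even}}\prod_{s\in S}\alpha_s^2\big)^{-1}$. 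The Maclaurin-type inequality $\sum_{|S|=j}\prod_{s\in S}\alpha_s^2 \le (t'^2)^j/j!$ then gives $\| P_+ - u_\emptyset \|_2 \le t'^2$ and $\| P_- - v_1 \|_2 \le t'^3$, where $v_1 := \sum_s \alpha_s u_{\{s\}}$, $\|v_1\|_2 = t'$; renormalizing yields $\| Q'Q'^* - RR^* \|_2 \le C t'^2$ with an absolute constant $C$, where $R$ has orthonormal columns $u_\emptyset$ and $v_1/t'$. The decisive point is that this first-order span $RR^*$ — spanned by $\bigotimes_s a^{(s)}$ and $\sum_s (\alpha_s/t')\,u_{\{s\}}$ — is invariant under a common rescaling $\alpha_s \mapsto \lambda\alpha_s$ (the factor cancels in the normalization), whereas the same inequality gives $\mbox{cond}_2(\hat W)^{-2} \ge t'^2 e^{-t'^2} \ge t'^2/2$, so $t'^2 \le 2\,\mbox{cond}_2(\hat W)^{-2} < 2/\Omega^2$.

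Finally I would define the regularized base: set $\tilde\alpha_s := \lambda\alpha_s$ with $\lambda := \sqrt 2/(t'\Omega) > 1$, $\tilde w_j^{(s)} := (a^{(s)} \pm \tilde\alpha_s b^{(s)})/\sqrt{1+\tilde\alpha_s^2}$, $\tilde w_j := \bigotimes_s \tilde w_j^{(s)}$, let $\tilde W$ have columns $\tilde w_1,\tilde w_2$, and $\tilde Q$ be its orthogonal factor. Then $\tilde t := (\sum_s \tilde\alpha_s^2)^{1/2} = \sqrt 2/\Omega$, and the identity and inequality above give $\mbox{cond}_2(\tilde W)^{-2} \ge \tilde t^2 e^{-\tilde t^2} \ge \tilde t^2/2 = \Omega^{-2}$, i.e.\ $\mbox{cond}_2(\tilde W) \le \Omega$ and $\tilde Q \in {\cal Q}_{2,\Omega}$. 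Rerunning the subset expansion for $\tilde W$, and using that $u_\emptyset$ and $v_1/t' = \sum_s (\tilde\alpha_s/\tilde t)\,u_{\{s\}}$ are unaffected by the rescaling, gives $\| \tilde Q\tilde Q^* - RR^* \|_2 \le C\tilde t^2 = 2C/\Omega^2$. The triangle inequality for orthogonal projectors then yields $\| QQ^* - \tilde Q\tilde Q^* \|_2 \le \Omega^{-2} + C(t'^2 + \tilde t^2) \le (1+4C)/\Omega^2$, which is the assertion with $\mu_2 = 2$ and $\mu_1 = 1+4C$ absolute. The main obstacle — and the reason the argument does not extend to ranks above two — is the uniformity in $d$: perturbing an ill-conditioned factor can swing the two-dimensional span arbitrarily, so everything hinges on isolating the scale-invariant span $RR^*$ and bounding the remainder by $\sum_s\alpha_s^2$ via the explicit $2\times2$ SVD formula (\ref{eq:parrank2norm}), with $\Omega \ge d$ being precisely what prevents this sum of $d$ small terms from growing.
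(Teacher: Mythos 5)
Your proof is correct, and it follows the paper's high-level plan: reduce to $Q\in{\cal Q}_{2,\infty}$ by density, parametrize each mode via (\ref{eq:parrank2norm}), invoke Lemma~\ref{lemma:cond} to deduce $\alpha_s<1/\Omega$ for all $s$, expand the Kronecker products over mode-subsets, and rescale $\alpha$ to a better-conditioned surrogate. Where you diverge is in the execution, and the divergence is an improvement in two respects. First, instead of comparing $Q$ to $\tilde Q$ directly via Lemma~\ref{lemma:acosine} applied to $w_1\pm w_2$ and $\tilde w_1 \pm \tilde w_2$ (the paper's route), you compare both to the scale-invariant ``first-order'' projector $RR^*$ spanned by $u_\emptyset$ and $v_1/t'$; because this intermediate object does not move under the common rescaling $\alpha_s\mapsto\lambda\alpha_s$, you avoid any cross-terms between the two perturbations. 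Second, you replace the paper's geometric-series tail bound $\|e_{\mathrm{tail}}\|/\|\alpha\|\le 2\|\alpha\|$ with the Maclaurin-type inequality $\sum_{|S|=j}\prod_{s\in S}\alpha_s^2\le (t'^2)^j/j!$, and you normalize $\alpha$ by its $2$-norm rather than its max-norm. Together these give $\|QQ^*-\tilde Q\tilde Q^*\|_2=O(\Omega^{-2})$, i.e.\ $\mu_2=2$, which is sharper than what the paper's argument yields (the paper passes through cosine bounds of order $1-8/\sqrt\Omega$, hence off-diagonal entries of $Q^*\tilde Q$ of order $\Omega^{-1/4}$ and a sine bound of order $\Omega^{-1/8}$). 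Both routes correctly establish the statement; yours buys a cleaner scale-invariance argument and a better exponent, while the paper's is shorter at the cost of constants. Two small remarks: the step $t'^2e^{-t'^2}\ge t'^2/2$ needs $t'^2\le\ln 2$, which does hold here since $t'^2<d/\Omega^2\le 1/\Omega\le 1/2$ and $\tilde t^2=2/\Omega^2\le 1/2$, but it is worth stating; and the observation that $\Omega\ge d$ is used ``only'' to guarantee $t'^2<1$ is correct, and in fact $\Omega\ge\sqrt d$ would already suffice for your argument.
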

\begin{proof}
As $Q \in {\cal Q}_2$, for any given $\varepsilon > 0$ there exists a $\hat Q \in {\cal Q}_{2, \infty}$, such that $\| QQ^* - \hat Q {\hat Q}^* \|_2 < \varepsilon$. Since $\varepsilon$ can be infinitely small, it suffices to prove the theorem for $Q \in {\cal Q}_{2, \infty}$. 

We will assume that the notation (\ref{eq:krorth}) holds. Assuming 
\begin{equation*}
\| w_1^{(s)} + w_2^{(s)}\|_2 > \| w_1^{(s)} - w_2^{(s)}\|_2, s = 1 \dots d
\end{equation*}
is not a loss of generality: since the orthogonal subspace spanned by columns of $Q$ does not depend on the signs of columns of $W = QR$, we can redefine $w_2^{(s)} \leftrightarrow - w_2^{(s)}$ if necessary. Then, we can utilize the parametrization (\ref{eq:parrank2norm}):
\begin{equation*}
w_1^{(s)} = \frac{a_s + \alpha_s b_s}{\sqrt{1 + \alpha_s^2}}, w_2^{(s)} = \frac{a_s - \alpha_s b_s}{\sqrt{1 + \alpha_s^2}}.
\end{equation*}

If at least one of the condition numbers of the factor bases $W^{(s)}$ is low enough: $\mbox{cond}_2(W^{(s)}) \leq \Omega$, then, by Lemma \ref{lemma:cond}, the bound $\mbox{cond}_2(W) \leq \Omega$ follows, thus $Q \in {\cal Q}_{2, \Omega}$ and the theorem holds trivially with $\tilde{Q} = Q$.

We can then assume that {\it all} factor bases $W^{(s)}$ are ill-conditioned: by (\ref{eq:parrank2norm}), $\alpha_s < \frac{1}{\Omega}, s = 1 \dots d$.
Let then by definition
\begin{align*}
 \tilde w_1^{(s)} & = \frac{a_s + \beta_s b_s}{\sqrt{1 + \beta_s^2}}, \tilde w_2^{(s)}  = \frac{a_s - \beta_s b_s}{\sqrt{1 + \beta_s^2}} \\
 \tilde w_1 & = \bigotimes_{s = 1} \tilde w_1^{(s)}, \tilde w_2 = \bigotimes_{s = 1} \tilde w_2^{(s)},
\end{align*}
where $\beta_s > \frac{1}{\Omega}$ are bounded from below to ensure that the condition number of $\tilde W = \begin{bmatrix} \tilde w_1 & \tilde w_2 \end{bmatrix}$ is limited.
Let us then show that the column subspaces of $W$ and $\tilde W$ are well aligned; in order to do so, let us consider the left singular bases of $W, \tilde W$ of the form
\begin{align*}
Q = \begin{bmatrix} q_1 & q_2 \end{bmatrix} & =
\begin{bmatrix} \frac{w_1 + w_2}{\| w_1 + w_2 \|_2} & \frac{w_1 - w_2}{\| w_1 - w_2 \|_2} \end{bmatrix} \\
\tilde Q = \begin{bmatrix} \tilde q_1 & \tilde q_2 \end{bmatrix} & =
\begin{bmatrix} \frac{\tilde w_1 + \tilde w_2}{\| \tilde w_1 + \tilde w_2 \|_2} & \frac{\tilde w_1 - \tilde w_2}{\| \tilde w_1 - \tilde w_2 \|_2} \end{bmatrix}
\end{align*}
Let us show that $q_1^* \tilde q_1, q_2^* \tilde q_2$ are both close to $1$, starting with the more complex $q_2^* \tilde q_2$. By opening the brackets in the Kronecker products,
\begin{align*}
w_1 - w_2 & = 2(\prod \limits_{s=1}^d \frac{1}{\sqrt{1 + \alpha_s}}) ( \alpha_1 b_1 \otimes a_2 \otimes \dots \otimes a_d + \alpha_2 a_1 \otimes b_2 \otimes \dots \otimes a_d + \dots \\ \dots & +  \alpha_d a_1 \otimes a_2 \otimes \dots \otimes b_d + e_{tail}), \\
\tilde w_1 - \tilde w_2 & = 2(\prod \limits_{s=1}^d \frac{1}{\sqrt{1 + \beta_s}}) ( \beta_1 b_1 \otimes a_2 \otimes \dots \otimes a_d + \beta_2 a_1 \otimes b_2 \otimes \dots \otimes a_d + \dots \\ \dots & +  \beta_d a_1 \otimes a_2 \otimes \dots \otimes b_d + \tilde e_{tail}),
\end{align*}
where
\begin{align*}
e_{tail} & = \sum \limits_{s_1} \sum \limits_{s_2 \neq s_1} \sum \limits_{s_3 \neq s_2, s_3 \neq s_1} \alpha_{s_1} \alpha_{s_2} \alpha_{s_3} z_{s_1 s_2 s_3} + \mbox{\{five nested sums\}} + \dots \\
\tilde e_{tail} & = \sum \limits_{s_1} \sum \limits_{s_2 \neq s_1} \sum \limits_{s_3 \neq s_2, s_3 \neq s_1} \beta_{s_1} \beta_{s_2} \beta_{s_3} z_{s_1 s_2 s_3} + \mbox{\{five nested sums\}} + \dots  \\
z_{s_1 s_2 \dots s_L} & := \bigotimes \limits_{s = 1}^d z_s, z_s = \begin{cases} a_s, s \neq s_1, s \neq s_2, \dots s \neq s_L;  \\
b_s, \mbox{otherwise}. \end{cases}
\end{align*}
Since all $z_{s_1 s_2 \dots s_L}$ constitute a set of $2^d$ orthogonal vectors, we can bound the tail norms using
\begin{align*}
\| \sum \limits_{s_1} \sum \limits_{s_2 \neq s_1} \sum \limits_{s_3 \neq s_2, s_3 \neq s_1} \alpha_{s_1} \alpha_{s_2} \alpha_{s_3} z_{s_1 s_2 s_3} \|_2^2 & = \sum \limits_{s_1} \sum \limits_{s_2 \neq s_1} \sum \limits_{s_3 \neq s_2, s_3 \neq s_1} \alpha_{s_1}^2 \alpha_{s_2}^2 \alpha_{s_3}^2 \\
& \leq \sum \limits_{s_1} \sum \limits_{s_2} \sum \limits_{s_3} \alpha_{s_1}^2 \alpha_{s_2}^2 \alpha_{s_3}^2 \\
& = \sum \limits_{s_1} \alpha_{s_1}^2 \sum \limits_{s_2} \alpha_{s_2}^2 \sum \limits_{s_3} \alpha_{s_3}^2 = (\| \alpha \|_2^2)^3;
\end{align*}
handling all the nested sums in the same way gives
\begin{align}
\| e_{tail} \|_2^2 & \leq (\| \alpha \|_2^2)^3 + (\| \alpha \|_2^2)^5 + (\| \alpha \|_2^2)^7 + \dots \leq \frac{(\| \alpha \|_2^2)^3}{1 - (\| \alpha \|_2^2)^2}; \nonumber \\
\| \tilde e_{tail} \|_2^2 & \leq (\| \beta \|_2^2)^3 + (\| \beta \|_2^2)^5 + (\| \beta \|_2^2)^7 + \dots \leq \frac{(\| \beta \|_2^2)^3}{1 - (\| \beta \|_2^2)^2}; \nonumber \\ 
\frac{\| e_{tail} \|_2^2 }{\| \alpha \|_2^2} & \leq \frac{(\| \alpha \|_2^2)^2}{1 - (\| \alpha \|_2^2)^2}; \frac{\| e_{tail} \|_2 }{\| \alpha \|_2} \leq \frac{\| \alpha \|_2^2}{\sqrt{1 - \| \alpha \|_2^4}} \leq 2 \| \alpha \|_2^2 \leq 2 \| \alpha\|_2; \label{eq:tailtoalpha} \\
\frac{\| \tilde e_{tail} \|_2^2 }{\| \beta \|_2^2} & \leq \frac{(\| \beta \|_2^2)^2}{1 - (\| \beta \|_2^2)^2}; \frac{\| \tilde e_{tail} \|_2 }{\| \beta \|_2} \leq \frac{\| \beta \|_2^2}{\sqrt{1 - \| \beta \|_2^4}} \leq 2 \| \beta \|_2^2 \leq 2 \| \beta \|_2; \nonumber
\end{align}
when $\| \alpha \|_2 < 0.9, \| \beta \|_2 < 0.9$. Since $q_2^* \tilde q_2$ corresponds to the cosine between $w_1 - w_2$ and $\tilde w_1 - \tilde w_2$, Lemma \ref{lemma:acosine} then gives
\begin{equation}\label{eq:cosdiff1}
\lvert (q_2, \tilde q_2)_2 - \frac{(\alpha, \beta)}{\| \alpha \|_2 \| \beta \|_2} \rvert \leq 4 \| \alpha \|_2 + 4 \| \beta \|_2.
\end{equation}
Let us now specify an appropriate $\beta$. The combined requirements for $\beta$ are the following:
\begin{equation*}
\begin{cases}
\| \beta \|_2 \leq 0.9 \\
\| \beta \|_C \geq \frac{1}{\Omega} \\
\frac{(\alpha, \beta)_2}{\| \alpha \|_2 \| \beta \|_2} \approx 1
\end{cases}.
\end{equation*}
The last two requirements can be fulfilled by a choice 
\begin{equation*}
\beta := \frac{1}{\Omega \| \alpha \|_C} \alpha;
\end{equation*}
Let us then bound the norm
\begin{equation*}
\| \beta \|_2 = \frac{\| \alpha \|_2}{\Omega \| \alpha \|_C} \leq \frac{\sqrt{d}}{\Omega} \leq \frac{1}{\sqrt{\Omega}}
\end{equation*}
under the theorem condition $\Omega \geq d$; since $d \geq 2$, $\|\beta \|_2 \leq \frac{1}{\sqrt{d}} < 0.9$. Since $\| \alpha \|_2~\leq~\| \beta \|_2$ by construction, combining the above bound with (\ref{eq:cosdiff1}) gives
\begin{align*}
(q_2, \tilde q_2)_2 \geq 1 - \frac{8}{\sqrt{\Omega}}.
\end{align*}
The bound for $(q_1, \tilde q_1)_2$ can be obtained in a similar way, but via considering even-sized nested sums; the main term of the expression for $w_1 + w_2$ is a vector
\begin{equation*}
w_1 + w_2 \approx 2(\prod \limits_{s=1}^d \frac{1}{\sqrt{1 + \alpha_s}}) (a_1 \otimes a_2 \otimes \dots \otimes a_d)
\end{equation*}
with a fixed unit norm. We will omit the analogous proof of
\begin{align*}
(q_1, \tilde q_1)_2 \geq 1 - \frac{8}{\sqrt{\Omega}}.
\end{align*}

Then, consider the $2 \times 2$ matrix $Q^* \tilde Q$; since the row and column norms of this product is limited by $1$, and the diagonal elements are equal to $1$ up to an error of order $\frac{1}{\sqrt{\Omega}}$, the offdiagonal elements of $Q^* \tilde Q$ are bounded by $\frac{\mu}{\Omega^{\frac{1}{4}}}$, where $\mu$ is an absolute constant. Thus, $Q^* \tilde Q$ is approximately equal to identity matrix with a residual bounded by $\frac{2 \mu}{\Omega^{\frac{1}{4}}}$ in Frobenius norm. Thus, the least singular value $\sigma_2(Q^* \tilde Q)$ is larger than $1 - \frac{2 \mu}{\Omega^{\frac{1}{4}}}$. Since $\sigma_2(Q^* \tilde Q)$ corresponds to the smallest cosine of the canonical angles between the subspaces $Q$ and  $\tilde Q$, the claim of the theorem follows from converting the bound towards the sines of these angles.
\end{proof}

In the next Lemma, let us prove that the set of orthogonal bases ${\cal Q}_{2, \Omega}$ can be well covered by a finite grid, and provide an estimate on the cardinality of the required set.
\begin{lemma}[Finite projector embeddings]
\label{lemma:embedding}
For any $1 \geq \zeta > 0, \Omega \geq d \geq 2$, there exists a finite set of bases $Q_1, Q_2, Q_3 \dots Q_{\chi_{\zeta}} \in {\cal Q}_{2, \infty}$, such that for each $Q \in {\cal Q}_{2, \Omega}$ there exists at least one $Q_j$ that suffices
\begin{equation}\label{eq:projdiff}
\| \tilde Q \tilde Q^* - Q_j Q_j^* \|_F \leq \zeta,
\end{equation}
and $\chi_{\zeta} \leq (\frac{\mu \Omega^2}{\zeta})^{2md}$, where $\mu$ is an absolute constant.
\begin{proof}
Recall that each $\tilde Q \in {\cal Q}_{2, \Omega}$ has a representation in the form (\ref{eq:krorth}). In order to constructively prove the statement of the Lemma, it is proposed to introduce finite grids over $2d$ unit spheres that contain the columns $\tilde w_1^{(s)}, \tilde w_2^{(s)}$. Since the condition numbers of $\tilde W^{(s)}$ are bounded, independent approximations for bases $\tilde W^{(s)}$, constructed using finite grids with appropriately scaled accuracy, will be sufficient to ensure (\ref{eq:projdiff}).

Assume that each vector $\tilde w_1^{(s)}, \tilde w_2^{(s)}$ (recall $\| \tilde w_1^{(s)} \|_2 = \| \tilde w_2^{(s)} \|_2 = 1$) is approximated using a finite grid over a unit sphere with accuracy $\tau = \frac{\zeta}{\Omega^2}$:
\begin{equation}\label{eq:grideps}
\| \tilde{w_1}^{(s)} - w_{1,j_{s,1}}^{(s)} \|_2 \leq \tau, \| \tilde{w_2}^{(s)} - w_{2,j_{s,2}}^{(s)} \|_2 \leq \tau.
\end{equation}

Assume $j$ is a multiindex over index combinations $j_{s,1}, j_{s,2}$, and define
\begin{equation*}
W_j := \bigotimes \limits_{s = 1}^d W^{(s)}_{j_s}, W^{(s)}_{j_s} := \begin{bmatrix} w_{1,j_{s,1}}^{(s)} & w_{2,j_{s,2}}^{(s)} \end{bmatrix}.
\end{equation*}

To proceed with the proof, let us establish two bounds on the approximation residuals $\| \Delta_b \|_F, \| \Delta_c \|_F$, where
\begin{align*}
\Delta_b & := \tilde W - W_j, \\
\Delta_c & := \tilde W^* \tilde W - W_j^* W_j.
\end{align*}
Using $\Omega \geq d \geq 2, \zeta \leq 1$, it can be seen that
\begin{align}
\| \tilde w_1 - w_{1, j_1} \|_2 & = \| \bigotimes_{s = 1}^d \tilde w_1^{(s)} - \bigotimes_{s = 1}^d (\tilde w_1^{(s)} + (w_{1, j_{s,1}}^{(s)} - \tilde w_1^{(s)})) \|_2 \leq (1 + \tau)^d - 1 \nonumber \\
& = (1 + \frac{\zeta}{\Omega^2})^d - 1 = \sum \limits_{k = 1}^d C^k_d (\frac{\zeta}{\Omega^2})^k = \sum \limits_{k = 1}^d \frac{C^k_d}{d^k} (\frac{\zeta}{\Omega})^k \leq \sum \limits_{k = 1}^d (\frac{\zeta}{\Omega})^k \nonumber \\
& \leq  \frac{\frac{\zeta}{\Omega}}{(1 - \frac{\zeta}{\Omega})} \leq \frac{\frac{\zeta}{\Omega}}{(1 - \frac{1}{d})} \leq 2 \frac{\zeta}{\Omega}, \label{eq:gridbound} \\
\| \tilde w_2 - w_{2, j_2} \|_2 & \leq \mbox{\{analogously\}} \leq 2 \frac{\zeta}{\Omega};
\| \Delta_b \|_F \leq 2\sqrt{2} \frac{\zeta}{\Omega}. \nonumber 
\end{align}
Since $\tilde w_1, \tilde w_2, w_{1, j_1}, w_{2, j_2}$ are all norm-one vectors, the four elements of $\Delta_c$ correspond to pairwise cosines between the columns of $\tilde W$ and $W_j$. Combining (\ref{eq:grideps}) with Lemma \ref{lemma:acosine} shows
\begin{equation*}
\| \Delta_c \|_F \leq 8 \frac{\zeta}{\Omega^2}.
\end{equation*}
Now let us transform $\| \tilde Q \tilde Q^* - Q_j Q_j^* \|_2$ into expressions that can be bounded using the proved bounds for $\| \Delta_b \|_F, \| \Delta_c \|_F$. By adding and subtracting $\tilde W (W_j^* W_j)^{-1} \tilde W^*$, it can be seen that
\begin{align*}
\| \tilde Q \tilde Q^* - Q_j Q_j^* \|_F & = \| \tilde W (\tilde W^* \tilde W)^{-1} \tilde W^* - W_j (W_j^* W_j)^{-1} W_j^* \|_F \\
& \leq \| \tilde W ((\tilde W^* \tilde W)^{-1} - (W_j^* W_j)^{-1}) \tilde W^* \|_F \\
& + \| \Delta_b (W_j^* W_j)^{-1} \tilde W^* \|_F + \| W_j (W_j^* W_j)^{-1} \Delta_b \|_F \\
& \leq \| \tilde W ((\tilde W^* \tilde W)^{-1} - (W_j^* W_j)^{-1}) \tilde W^* \|_F \\
& + 2 \| \Delta_b (W_j^* W_j)^{-1} \tilde W^* \|_F + \| \Delta_b (W_j^* W_j)^{-1} \Delta_b \|_F.
\end{align*}
Since $\| \tilde W \|_F^2 = 2 = \sigma_1^2(\tilde W) + \sigma_2^2(\tilde W)$ and $\sigma_1(\tilde W) = \mbox{cond}_2(\tilde W) \sigma_2(\tilde W)$, the second singular value can be bounded using $\Omega$ directly as
\begin{equation*}
\sigma_2^2(\tilde W) = \frac{2}{1 + \mbox{cond}_2(\tilde W)^2},
\end{equation*}
which then leads to
\begin{equation*}
\frac{1}{\sigma_2^2(\tilde W)} = \frac{1 + \mbox{cond}_2(\tilde W)^2}{2} \leq \frac{1 + \Omega^2}{2}.
\end{equation*} 
The three terms can then be bound individually.
\begin{align*}
\| \tilde W ((\tilde W^* \tilde W)^{-1} & - (\tilde W^* \tilde W - \Delta_c)^{-1}) \tilde W^* \|_F = \| \tilde W (\tilde W^* \tilde W)^{-1} \Delta_c (\tilde W^* \tilde W - \Delta_c)^{-1} \tilde W^* \|_F \\
& \leq \| \tilde W (\tilde W^* \tilde W)^{-1} \|_2 \| \Delta_c \|_F \| \sum \limits_{k = 0}^{\infty} ((W^* W)^{-1} \Delta_c)^k \|_2 \| (\tilde W^* \tilde W)^{-1} \tilde W^* \|_2 \\
& \leq \| \tilde W (\tilde W^* \tilde W)^{-1} \|_2 \| \Delta_c \|_F \frac{1}{1 - \| (\tilde W^* \tilde W)^{-1} \|_2 \| \Delta_c \|_2} \| (\tilde W^* \tilde W)^{-1} \tilde W^* \|_2 \\
& = \frac{1}{\sigma_2^2(\tilde W)} \| \Delta_c \|_F \frac{1}{1 - \frac{\| \Delta_c \|_2}{\sigma_2^2(\tilde W)}} \leq 4 \zeta \frac{\Omega^2 + 1}{\Omega^2} \frac{1}{1 - 4 \zeta \frac{\Omega^2 + 1}{\Omega^2}} \leq \frac{8 \zeta}{1 - 8 \zeta}; \\
\| \Delta_b (W_j^* W_j)^{-1} \tilde W^* \|_F & = \| \Delta_b (\tilde W^* \tilde W - \Delta_c)^{-1} \tilde W^* \|_F \\
& = \| \Delta_b \sum \limits_{k = 0}^{\infty} ((W^* W)^{-1} \Delta_c)^k (\tilde W^* \tilde W)^{-1} \tilde W^* \|_F \\
& \leq \| \Delta_b \|_F \| \sum \limits_{k = 0}^{\infty} ((W^* W)^{-1} \Delta_c)^k \|_2 \| (\tilde W^* \tilde W)^{-1} \tilde W^* \|_2 \\ 
& \leq \frac{\| \Delta_b \|_F}{\sigma_2(\tilde W)} \frac{1}{1 - \frac{\| \Delta_c \|_2}{\sigma_2^2(\tilde W)}} \leq 2\sqrt{2} \zeta \frac{\sqrt{1 + \Omega^2}}{\Omega}  \frac{1}{1 - 4 \zeta \frac{\Omega^2 + 1}{\Omega^2}} \leq \frac{4 \zeta}{1 - 8 \zeta}; \\
\| \Delta_b (W_j^* W_j)^{-1} \Delta_b \|_F & = \| \Delta_b (\tilde W^* \tilde W - \Delta_c)^{-1} \Delta_b \|_F \\
& \leq \| \Delta_b \|^2_F \| (\tilde W^* \tilde W)^{-1} \|_2 \| \sum \limits_{k = 0}^{\infty} ((W^* W)^{-1} \Delta_c)^k \|_2 \\
& \leq \frac{\| \Delta_b \|_F^2}{\sigma_2^2(\tilde W)} \frac{1}{1 - \frac{\| \Delta_c \|_2}{\sigma_2^2(\tilde W)}} \leq 8 \zeta \frac{1 + \Omega^2}{\Omega^2}  \frac{1}{1 - 4 \zeta \frac{\Omega^2 + 1}{\Omega^2}} \leq \frac{16 \zeta}{1 - 8 \zeta},
\end{align*}
where the Neumann sum expression $(I - A)^{-1} = \sum \limits_{k = 0}^{\infty} A^k$ that were utilized multiple times are valid under the assumption $\zeta < \frac{1}{8}$. With a more strict assumption $\zeta < \frac{1}{16}$ combining the above bounds shows
\begin{equation*}
\| \tilde Q \tilde Q^* - Q_j Q_j^* \|_F \leq \zeta \frac{32 \zeta}{1 - 8 \zeta} < 64 \zeta.
\end{equation*}
Recall $j$ is a multiindex over combinations of $2d$ indeces $j_{s, 1}, j_{s, 2}$. The range of values $j_{s, 1}, j_{s, 2}$ corresponds to the number of elements on a unit sphere, sufficient to form a finite $\tau$-grid, which can be bounded by $(\frac{6}{\tau})^m = (\frac{6 \Omega^2}{\zeta})^m$ each (the sufficient cardinalities of $\epsilon$-grids for unit spheres can be found in \cite{widthbook}, for example). Thus, the value $\chi$, which corresponds to the range of the multiindex $j$, can be bounded by the products of ranges of indeces $j_{s, 1}, j_{s, 2}$, which gives
\begin{equation*}
\chi = (\frac{6 \Omega^2}{\zeta})^{2md}.
\end{equation*}
Rescaling $\zeta \rightarrow \hat \zeta = \frac{\zeta}{64} < \frac{1}{16}, \tau \rightarrow \hat \tau = \frac{\hat \zeta}{\Omega^2}$ finishes the proof.
\end{proof}
\end{lemma}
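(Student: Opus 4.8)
\emph{Proof strategy.} The plan is to cover ${\cal Q}_{2,\Omega}$ by rounding, independently on each of the $2d$ unit spheres in $\mathbb{R}^m$ that carry the canonical factors, and to show that this rounding moves the orthogonal projector $\tilde Q\tilde Q^{*}$ by at most $\zeta$ once the grid spacing is taken proportional to $\zeta/\Omega^{2}$. Concretely, I would fix a generic $\tilde Q\in{\cal Q}_{2,\Omega}$ together with one of its canonical representations, $\tilde W=\tilde Q R$, $\tilde w_1=\bigotimes_{s=1}^{d}\tilde w_1^{(s)}$, $\tilde w_2=\bigotimes_{s=1}^{d}\tilde w_2^{(s)}$, $\|\tilde w_1^{(s)}\|_2=\|\tilde w_2^{(s)}\|_2=1$, $\mbox{cond}_2(\tilde W)\le\Omega$. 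On each of the $2d$ spheres I would fix a $\tau$-net of cardinality at most $(\mu_0/\tau)^{m}$ (the standard Euclidean covering bound, see \cite{width}) with $\tau=\zeta/\Omega^{2}$; letting $w_{1,j_{s,1}}^{(s)}$ and $w_{2,j_{s,2}}^{(s)}$ denote the net points nearest to $\tilde w_1^{(s)}$ and $\tilde w_2^{(s)}$, I set $W_j=\bigotimes_{s=1}^{d}[\,w_{1,j_{s,1}}^{(s)}\ \ w_{2,j_{s,2}}^{(s)}\,]$ and take $Q_j$ to be its orthonormal factor. The candidate family is indexed by the multi-index $j$ over all combinations of the $2d$ net indices, so $\chi_\zeta\le((\mu_0/\tau)^{m})^{2d}=(\mu_0\Omega^{2}/\zeta)^{2md}$; a final rescaling of $\zeta$ by an absolute constant converts $\mu_0$ into the $\mu$ of the statement.

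Next I would push the $\tau$-closeness of the factors through the Kronecker products. For the columns, with $\Delta_b:=\tilde W-W_j$, each Kronecker-product column moves by at most $(1+\tau)^{d}-1$, which, using $\Omega\ge d$ and $\zeta\le1$, telescopes to $\|\Delta_b\|_F\le C_1\zeta/\Omega$. For the Gram matrices, each entry of $\Delta_c:=\tilde W^{*}\tilde W-W_j^{*}W_j$ is a difference of cosines between unit vectors, so Lemma~\ref{lemma:acosine} applied factor-by-factor and then telescoped over the Kronecker factors gives $\|\Delta_c\|_F\le C_2\zeta/\Omega^{2}$. The key quantitative input is that the conditioning of $\tilde W$ is governed by $\Omega$ alone: since every Kronecker-product column has unit norm, $\|\tilde W\|_F^{2}=2=\sigma_1^{2}(\tilde W)+\sigma_2^{2}(\tilde W)$ and $\sigma_1(\tilde W)=\mbox{cond}_2(\tilde W)\,\sigma_2(\tilde W)$ give $\sigma_2^{-2}(\tilde W)=(1+\mbox{cond}_2(\tilde W)^{2})/2\le(1+\Omega^{2})/2$. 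For $\zeta$ below an absolute threshold this forces $\|\Delta_c\|_2<\sigma_2^{2}(\tilde W)$, so $W_j^{*}W_j=\tilde W^{*}\tilde W-\Delta_c$ is positive definite and $Q_j\in{\cal Q}_{2,\infty}$ is a legitimate choice.

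Then I would convert the two residual bounds into the projector bound through the formulas $\tilde Q\tilde Q^{*}=\tilde W(\tilde W^{*}\tilde W)^{-1}\tilde W^{*}$ and $Q_jQ_j^{*}=W_j(W_j^{*}W_j)^{-1}W_j^{*}$: adding and subtracting $\tilde W(W_j^{*}W_j)^{-1}\tilde W^{*}$ splits $\tilde Q\tilde Q^{*}-Q_jQ_j^{*}$ into one term built from $(\tilde W^{*}\tilde W)^{-1}-(W_j^{*}W_j)^{-1}$ — treated via the resolvent identity $(A-\Delta_c)^{-1}-A^{-1}=A^{-1}\Delta_c(A-\Delta_c)^{-1}$ with $A=\tilde W^{*}\tilde W$ and a convergent Neumann series, and bounded by a constant times $\|\Delta_c\|_F\,\sigma_2^{-2}(\tilde W)$ — plus two terms carrying one or two copies of $\Delta_b$, bounded by constants times $\|\Delta_b\|_F\,\sigma_2^{-1}(\tilde W)$ and $\|\Delta_b\|_F^{2}\,\sigma_2^{-2}(\tilde W)$. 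Each of these three quantities is at most a constant times $\zeta(1+\Omega^{2})/\Omega^{2}$, hence at most a constant times $\zeta$, so $\|\tilde Q\tilde Q^{*}-Q_jQ_j^{*}\|_F\le C\zeta$ with $C$ an absolute constant. Running the whole construction with $\hat\zeta:=\zeta/C$ in place of $\zeta$ (hence $\hat\tau:=\hat\zeta/\Omega^{2}$) yields $\|\tilde Q\tilde Q^{*}-Q_jQ_j^{*}\|_F\le\zeta$ together with $\chi_\zeta\le(\mu\Omega^{2}/\zeta)^{2md}$, $\mu:=\mu_0C$.

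The crux is this last conversion: a $\tau$-perturbation of the factors is amplified by $\|(\tilde W^{*}\tilde W)^{-1}\|_2=\sigma_2^{-2}(\tilde W)$, which can be as large as $(1+\Omega^{2})/2$, so the grid has to be pre-refined by a factor $\Omega^{-2}$ — precisely why $\tau$ scales like $\zeta/\Omega^{2}$ and why the $\Omega^{2}$ surfaces in the cardinality bound — and one has to verify, uniformly in $\tilde Q$, $\Omega$ and $d$, that the Neumann series for $(\tilde W^{*}\tilde W-\Delta_c)^{-1}$ converges, i.e. that $\zeta$ lies below an absolute threshold. The hypothesis $\Omega\ge d$ enters only to keep $(1+\tau)^{d}-1=O(\zeta/\Omega)$, so that the geometric growth of the Kronecker product does not overrun the error budget.
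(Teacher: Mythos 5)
Your proposal is correct and follows essentially the same route as the paper: a $\tau$-net with $\tau=\zeta/\Omega^{2}$ on each of the $2d$ factor spheres, the telescoped bounds on $\Delta_b$ and $\Delta_c$ (the latter via Lemma~\ref{lemma:acosine}), the singular-value bound $\sigma_2^{-2}(\tilde W)\le(1+\Omega^{2})/2$, the three-term splitting of the projector difference with a Neumann-series resolvent argument, and a final rescaling of $\zeta$ by an absolute constant. The only addition beyond the paper's argument is your explicit check that $W_j^{*}W_j$ remains positive definite so that $Q_j$ is well defined, which the paper leaves implicit.
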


We are now ready to provide a proof for the Theorem \ref{th:main}. Recall the claim:
\begin{equation*}
\mathbb{P}\{\sup_{\mbox{rank}({\bf V}) \leq 2, \| {\bf V} \|_F = 1} ({\bf N}, {\bf V})_F > \mu \sqrt{m d^2 \ln m} + t \} \leq e^{-\frac{t^2}{4}} + 2 e^{- \frac{m^{d / 2}}{8}}
\end{equation*}
for a constant $\mu$ and random i.i.d. standard Gaussian elements of a $d$-dimensional tensor ${\bf N} \in \mathbb{R}^{m \times m \times \dots \times m}$.
\begin{proof}
Let $Q \in {\cal Q}_2$ and $n := vec({\bf N})$. Assume $Q_1, Q_2 \dots Q_{\chi}$ is a finite projector embedding, that fulfills the terms of Lemma \ref{lemma:embedding}. Assume the condition number $\Omega$ and the approximation quality $\zeta$ to be parameters that will be defined later, and assume that $\tilde Q \in {\cal Q}_{2, \Omega}$ approximates the subspace of $Q$ as in Theorem \ref{th:condappr}. Then, for each $j$
\begin{equation*}
\| Q^* n \|_2 = \| Q Q^* n \|_2 \leq \|Q_j Q_j^* n \|_2 + \| Q_j Q_j^* - \tilde Q \tilde Q^* \|_2 \| n \|_2 + \| \tilde Q \tilde Q^* - Q Q^* \|_2 \| n \|_2.
\end{equation*}
Since $\tilde Q \in {\cal Q}_{2, \Omega}$, by Lemma \ref{lemma:embedding} there exists at least one $j$, such that \mbox{$\| Q_j Q_j^* - \tilde Q \tilde Q^* \|_2 \leq \zeta$}; for that index $j$ it then holds
\begin{equation*}
\| Q^* n \|_2 \leq \| Q_j^* n \|_2 + (\delta + \zeta) \| n \|_2,
\end{equation*}
where the following conditions on the parameters $\delta, \zeta$ are fulfilled ($\mu_k$ are absolute constants):
\begin{align}
\delta & = \frac{\mu_1}{\Omega^{\mu_2}}; \mbox{ (Theorem \ref{th:condappr})} \label{eq:deltadef} \\
\chi & = (\frac{\mu_3 \Omega^2}{\zeta})^{2md}. \mbox{ (Lemma \ref{lemma:embedding})} \label{eq:chidef}
\end{align}
As the bounds of Theorem \ref{th:condappr} and Lemma \ref{lemma:embedding} have no dependence on $Q$, optimizing over $Q \in {\cal Q}_2$ gives
\begin{equation}
\label{eq:balance}
\max_{Q \in {\cal Q}_2} \| Q^* n \|_2 \leq \max_{j = 1 \dots \chi} \| Q_j^* n \|_2 + (\delta + \zeta) \| n \|_2
\end{equation}
Since the selection of the finite embedding $Q_1, Q_2 \dots Q_{\chi}$ does not depend on $n$, for each particular $j$ each element of the vector $Q_j^* n \in \mathbb{R}^2$ is a random variable with a standard normal distribution. Then, since \mbox{$\|Q_j n \|_2 \leq \sqrt{2} \|Q_j n\|_C$}, and $\max \limits_{j = 1 \dots \chi} \|Q_j n\|_C$ is a maximum over $2 \chi$ dependent standard normal random variables, a union bound can be applied:
\begin{align}
\mathbb{P}\{ \max \limits_{j = 1 \dots \chi} \| Q_j^* n \|_2 > 2 \sqrt{\ln \chi} + t\} & \leq 4 \chi \mathbb{P} \{{\cal N}(0,1) > \sqrt{2 \ln \chi} + \frac{t}{\sqrt{2}}\} \nonumber \\
& \leq \chi e^{-\frac{(\sqrt{2 \ln \chi} + \frac{t}{\sqrt{2}})^2}{2}} \label{eq:unionbound} \\
& < \chi e^{-\frac{(\sqrt{2 \ln \chi})^2}{2}} e^{-\frac{t^2}{4}} = e^{-\frac{t^2}{4}}, \nonumber 
\end{align}
where a common Gaussian tail bound 
\begin{equation*}
\mathbb{P} \{{\cal N}(0,1) > t \} \leq \frac{1}{t\sqrt{2\pi}}e^{-\frac{t^2}{2}}
\end{equation*}
is applied, and an irrelevant term $\frac{4}{\sqrt{4\pi \ln \chi}} < 1$ is discarded.

The bound (\ref{eq:unionbound}) suggests that $\max \limits_{j = 1 \dots \chi} \| Q_j^* n \|_2 \lesssim 2 \sqrt{\ln \chi}$. Let us then balance the terms of (\ref{eq:balance}) to the same value of ${\cal G}$:
\begin{align}
2 \sqrt{\ln \chi} & \leq \frac{\cal G}{3}; \nonumber \\
\delta \| n \|_2 & \leq \frac{\cal G}{3}; \label{eq:balancecrude} \\
\zeta \| n \|_2 & \leq \frac{\cal G}{3}. \nonumber
\end{align}
The value $\| n \|_2^2$ has a chi-squared distribution with a high number $m^d$ of degrees of freedom, and is tightly concentrated around its mean: by employing the Laurent-Massart bound (\cite{laurentmassart}, section 4.1), one can see that
\begin{equation}
\mathbb{P} \{ \lvert \| n \|_2^2 - m^d \rvert \leq \frac{m^d}{2} \} \geq \mathbb{P} \{ \lvert \| n \|_2^2 - m^d \rvert \leq \frac{m^d}{4} + \frac{m^{\frac{d}{2}}}{4} \} \geq 1 - 2 e^{-\frac{m^{\frac{d}{2}}}{8}} \label{eq:chi2bound}
\end{equation}
If the concentration event (\ref{eq:chi2bound}) holds, the sufficient conditions for the balancing (\ref{eq:balancecrude}) can be seen as 
\begin{align}
	2 \sqrt{\ln \chi} & \leq \frac{\cal G}{3}; \label{eq:balancechi} \\
	\delta & \leq \frac{\cal G}{4 m^{\frac{d}{2}}}; \label{eq:balancedelta} \\
	\zeta & \leq \frac{\cal G}{4 m^{\frac{d}{2}}}. \label{eq:balancezeta}
	\end{align}
Let us show that the balancing can be carried out using a particular choice of parameters. Recalling (\ref{eq:deltadef}), (\ref{eq:balancedelta}) can be fulfilled with equality by specifying $\Omega$:
\begin{align*}
\frac{\mu_1}{\Omega^{\mu_2}} & = \frac{\cal G}{4 m^{\frac{d}{2}}}; \Omega = \frac{\mu_4 m^{\frac{d}{2 \mu_2}}}{{\cal G}^{\frac{1}{\mu_2}}}, \mbox{ where } \mu_4 := (4 \mu_1)^{\frac{1}{\mu_2}}.
\end{align*}
Plugging the value of $\Omega$ into (\ref{eq:chidef}), we obtain
\begin{equation*}
\chi = (\frac{\mu_5 m^{\frac{d}{\mu_2}}}{\zeta {\cal G}^{\frac{2}{\mu_2}}})^{2md}, \mu_5 := \mu_3 \mu_4^2;
\end{equation*}
Similarly to $\mu_4, \mu_5$, the further appearing absolute constants $\mu_6, \mu_7 \dots$ are directly expressible using the previous ones, but we will stop providing explicit formulas for readability. By applying a logarithm, we obtain
\begin{equation}
\ln \chi = \mu_6 m d + \mu_7 m d^2 \ln m - \mu_8 m d \ln \zeta - \mu_9 m d \ln {\cal G}. \label{eq:almost}
\end{equation}
Taking (\ref{eq:balancezeta}) with equality as a definition for $\zeta$, 
\begin{equation*}
\ln \zeta = \ln {\cal G} - \mu_{10} - \mu_{11} d \ln m.
\end{equation*}
Plugging it into (\ref{eq:almost}) and using (\ref{eq:balancechi}) gives a final inequality on ${\cal G}$:
\begin{equation}
\label{eq:mainineq}
{\cal G}^2 \geq \mu_{12} md + \mu_{13} md^2 \ln m - \mu_{14} md \ln {\cal G}.
\end{equation}
The selection
\begin{equation*}
{\cal G}^2 := \mu_{15} m d^2 \ln m, {\cal G} = \mu_{16} \sqrt{m d^2 \ln m}
\end{equation*}
guarantees that the inequality is fulfilled (it can be checked that $\mu_{13} > 1$, thus $\ln \mu_{16} > 0$, and the last term of (\ref{eq:mainineq}) is negative), which concludes the proof.
\end{proof}

\section{Numerical experiments}
In this section, we will provide a set of numerical experiments on synthetic data, in order to
\begin{itemize}
\item Check the sharpness of the Theorem \ref{th:main} bounds;
\item Numerically study the tensor approximation perturbations 
\begin{equation*}
\varepsilon_{{\cal P_K}}({\bf T}, {\bf N}) := \| {\cal P_K}({\bf T} + {\bf N}) - {\bf T} \|_F.
\end{equation*}
for higher-rank tensors. Since for $R > 1$, the canonical, Tensor-Train and Tucker low-rank tensor representations are non-equivalent, the obtained numerical asymptotic estimates will be different for each representation.
\end{itemize}

Let us first numerically check the rank-one bounds of Theorem \ref{th:main}. Let us fix total tensor element count $M := m^d$ at $M = 2^{24}$, and consider variable tensor dimensionalities $d$ with combinations \mbox{$\{d = 2, m = 2^{12}\}$}, \mbox{$\{d = 3, m = 2^{8}\}$} \dots \mbox{$\{d = 24, m = 2\}$}. Assume for each possible $d$ a rank-one tensor is generated factor-wise randomly, and scaled to unit Frobenius norm. Then, an element-wise i.i.d. Gaussian noise is applied to the tensor, and the ALS \cite{als} is applied to the result as ${\cal P_K}$ in order to numerically assess the perturbation variable \mbox{$\varepsilon_{{\cal P_K}}({\bf T}, {\bf N})$} defined as in (\ref{eq:objective}). 

The results are provided on Figure \ref{fig:dimension}. The 'Projection error' graph corresponds to the experimental values of \mbox{$\varepsilon_{{\cal P_K}}({\bf T}, {\bf N})$}, while the 'ALS residual' and the 'Noise norm' lines depict the left and right sides of the hypothesis (\ref{eq:hypo}). Note that in all cases these two values are {\it very} close. When a high dimensionality $d$ is combined with an exceedingly high value of the noise norm (Figure \ref{fig:dimensionhigh}), the hypothesis (\ref{eq:hypo}) may fail (by a tiny margin), which commonly results in a complete loss of approximation \mbox{$\varepsilon_{{\cal P_K}}({\bf T}, {\bf N})$}. That numerically confirms the necessity and reasonableness of the proposed hypothesis, and shows the imperfection of the ALS algorithm.

Note that in practice the issue can be resolved by successive 'multi-grid' approximations: if for a small $d = 3$ the ALS satisfies the hypothesis (\ref{eq:hypo}) and returns a decent approximation \mbox{$\varepsilon_{{\cal P_K}}({\bf T}, {\bf N})$}, then the resulting factors of size $2^{8}$ can be organized into matrices of size \mbox{$2^4 \times 2^4$}, that can be SVD-approximated with a rank of one. The resulting $2 \times 3 = 6$ factors of size $2^4$ can be fed as input to another run of ALS with dimension $d = 6$. The factors of the six-dimensional approximation can be in a similar way recursively be used to construct an initial approximation for an ALS with $d = 12$, and so on. Empirically, such a strategy allows better high-dimensional approximations in case of data subject to large noises.

The results of the experiment on the Figure \ref{fig:dimension} are well-approximated by an asymptotic formula, presented as a yellow line. Let us compare this asymptotic formula with a scaled theoretical bound of Theorem \ref{th:main}, when both are expressed with $d, M$:

\begin{align*}
\varepsilon_{{\cal P_K}}({\bf T}, {\bf N}) & \approx \sqrt{\frac{dM^{\frac{1}{d}}}{M}} \mathbb{E} \| {\bf N} \|_F & \mbox{\{empirical, yellow line on Figure \ref{fig:dimension}\}}; \\
\varepsilon_{{\cal P_K}}({\bf T}, {\bf N}) & \lesssim \sqrt{\frac{d M^{\frac{1}{d}} \log(M)}{M}} \mathbb{E} \| {\bf N} \|_F, w.h.p. & \mbox{\{scaled Theorem \ref{th:main}\}}.
\end{align*}
The results indicate that the proved Theorem \ref{th:main} is only inaccurate up to a minor term $\log(M)$ and a constant, and reflects the overall asymptotic behavior well.

\begin{figure}
	\subfloat[Low noise norm $\|{\bf N}\|_F = \frac{1}{10} \|{\bf T}\|_F$]
	{\includegraphics[width=0.45\textwidth]{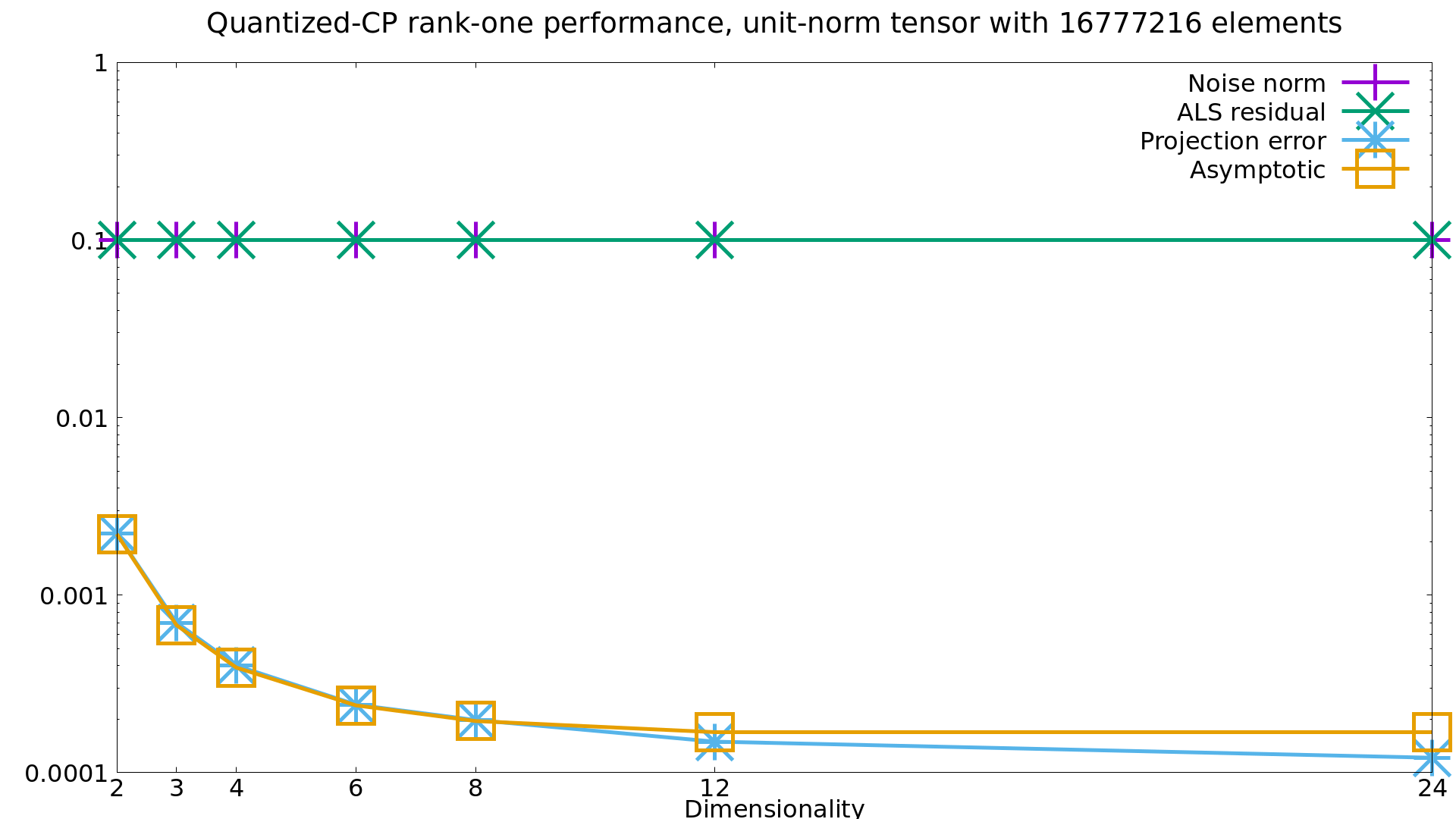}
	\label{fig:dimensionlow}}
	\hfill
	\subfloat[High noise norm $\|{\bf N}\|_F = 10 \|{\bf T}\|_F$]
	{\includegraphics[width=0.45\textwidth]{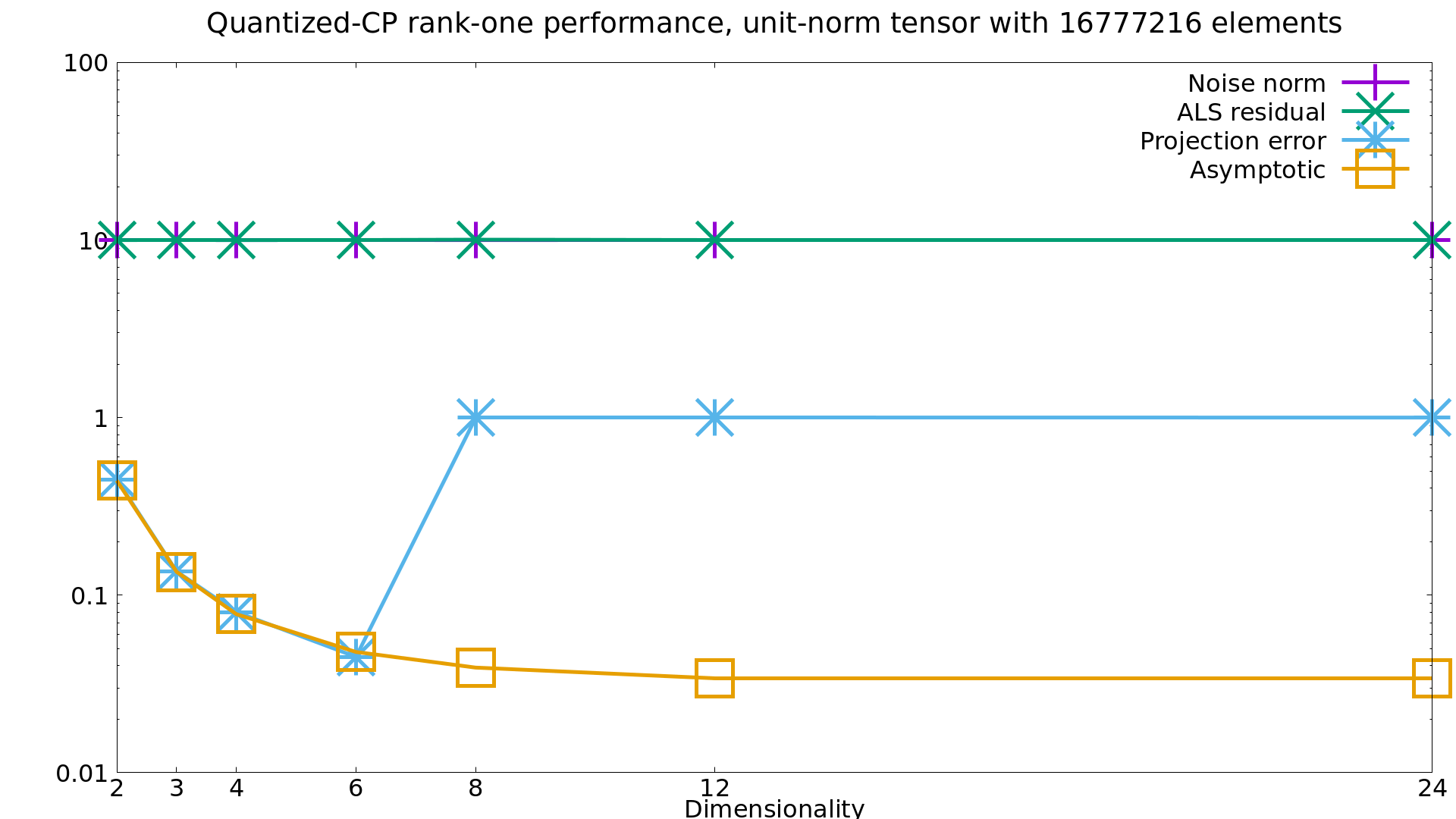} \label{fig:dimensionhigh}}

	\caption{Rank-one approximation perturbations in dependence of dimensionality.}
	\label{fig:dimension}	
\end{figure}

Let us now provide a numerical study of the case of higher ranks. In case $R > 1$ the variables $\varepsilon_{{\cal P_K}}({\bf T}, {\bf N})$ and $\| N \|_{F, {\cal K}}$ depend on the type of a low-rank tensor representation. The following formats were considered:
\begin{itemize}
\item Canonical tensor format; ${\cal P_K}$ then denotes the result of the ALS \cite{als} algorithm.
\item Tensor-Train format; ${\cal P_K}$ then denotes the result of the TT-SVD \cite{tt} algorithm, run with {\it each} tensor-train rank limited to the same value of $R$;
\item Tucker tensor format; ${\cal P_K}$ then denotes the result of the HOSVD \cite{hosvd} algorithm, run with {\it each} Tucker rank limited to the same value of $R$.
\end{itemize}

The experiments were carried out as follows. For each of the three structures, a sequence of tensors $\{ {\bf T_r}, \mbox{rank}({\bf T_r}) = r\}$ with a same size and norm, but varied rank were generated randomly, subjected to an element-wise random i.i.d. Gaussian noise, and then approximated by the corresponding algorithm ${\cal P_K}$. The numerically obtained values $\varepsilon_{{\cal P_K}}({\bf T_r}, {\bf N})$ were then fit to a curve of the form $C r^{\alpha}$ by solving a least-squares problem in a logarithmic scale, and the resulting $\alpha$ values were of most interest. The random tensor generation details include
\begin{itemize}
	\item Canonical tensor format: $M = 2^{18}$, each canonical factor elements of ${\bf T_r}$ was chosen as a i.i.d. Gaussian variables, and the resulting tensor was scaled to unit Frobenius norm;
	\item Tensor-Train format: $M = 2^{18}$, ${\bf T_r}$ were selected as a TT-SVD approximations to a element-wise random i.i.d. Gaussian tensors, scaled to unit Frobenius norm;
	\item Tucker tensor format: $M = 2^{20}$, the Tucker core elements were selected as i.i.d. Gaussian variables, and the Tucker factors were selected as orthogonal bases of subspaces spanning random i.i.d. Gaussian columns. The resulting tensor product was then also scaled to a unit Frobenius norm.
\end{itemize}

The obtained results are provided on Figures \ref{fig:canonical}, \ref{fig:tt}, \ref{fig:tucker}. It can be see that $\alpha$ does not change with dimensionality $d$ for the canonical and Tensor-Train formats, but does depend on $d$ for the Tucker format. Empirically, under the assumption (\ref{eq:hypo}) on the approximation algorithm ${\cal A}_{\cal K}$ it holds
\begin{equation}
\label{eq:mainrr}
\varepsilon_{{\cal A}_{\cal K}}({\bf T}, {\bf N}) \lesssim
\left[
\begin{array}{l}
\sqrt{\frac{d R M^{\frac{1}{d}}\log d}{M}} \mathbb{E} \| {\bf N} \|_F, \mbox{ for a canonical rank}; \\
\sqrt{\frac{d R^2 M^{\frac{1}{d}}\log d}{M}} \mathbb{E} \| {\bf N} \|_F, \mbox{ for a tensor train rank}; \\
\sqrt{\frac{d R^d M^{\frac{1}{d}}\log d}{M}} \mathbb{E} \| {\bf N} \|_F, \mbox{ for Tucker rank}.
\end{array}
\right .
\end{equation}

 The obtained canonical and Tucker format empirical bounds directly correspond to each other, since a rank-$R$ Tucker tensor always has a rank-$R^d$ canonical representation, but the Tensor-Train bound should, apparently, be studied independently.
 
 It is noticeable that for all considered representations the obtained bound~(\ref{eq:mainrr}) for the approximation quality $\varepsilon_{{\cal P}_{\cal K}}({\bf T}, {\bf N})$ has a same asymptotic dependence with the number of representation parameters.

\begin{figure}
	\subfloat[Tensor dimension $d$ set to $3$]
	{\includegraphics[width=0.45\textwidth]{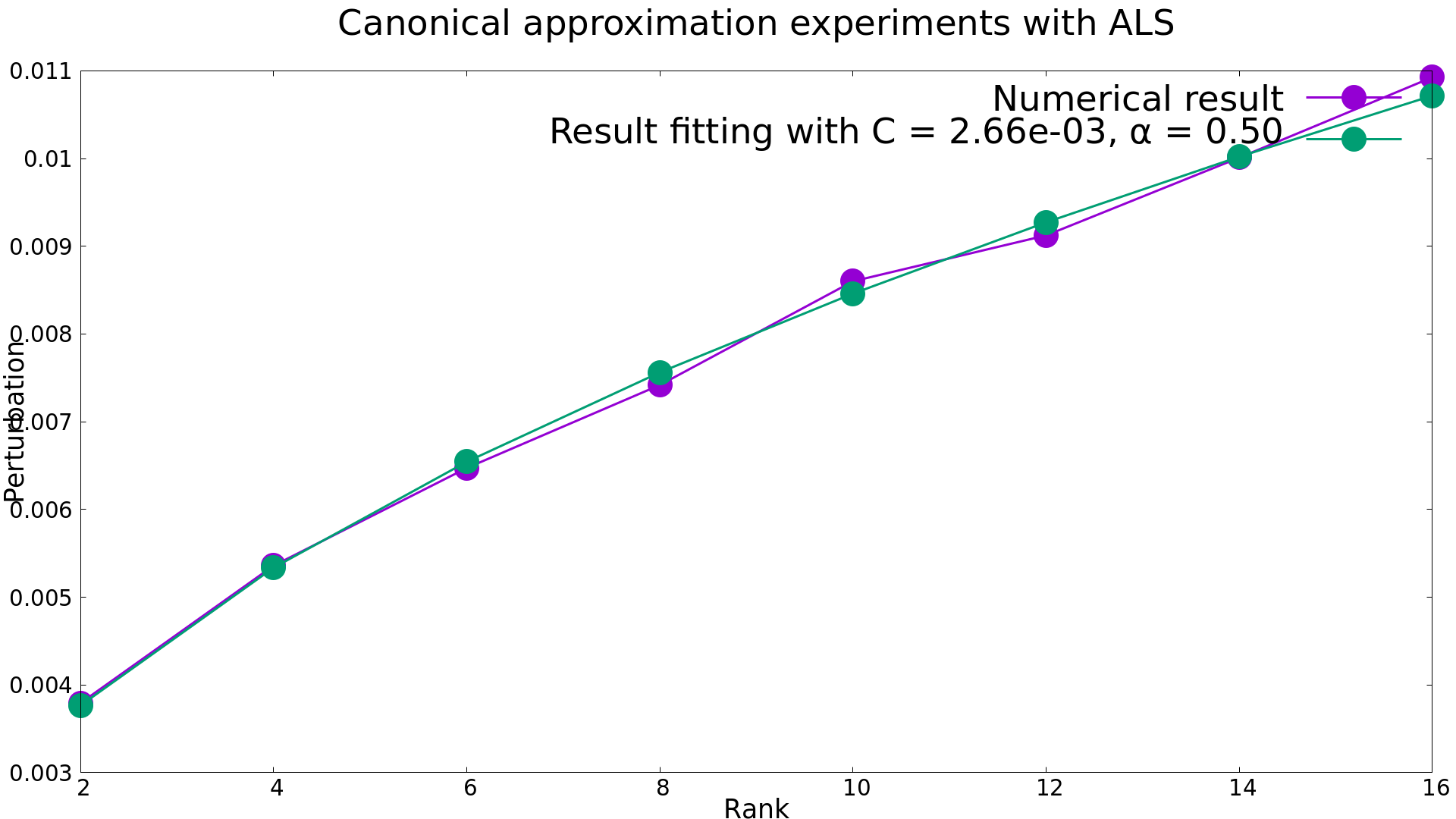}}
	\hfill
	\subfloat[Tensor dimension $d$ set to $6$]
	{\includegraphics[width=0.45\textwidth]{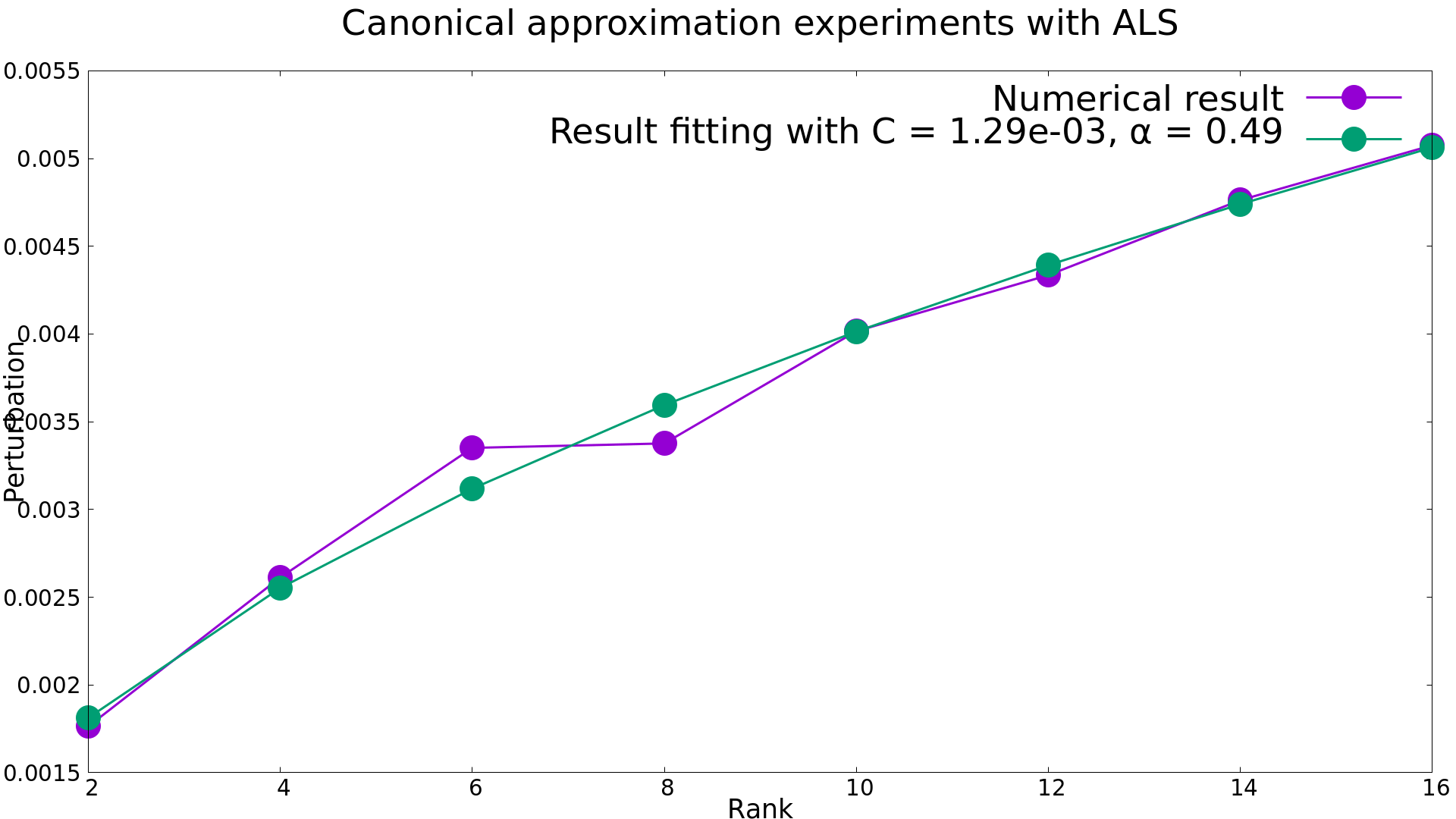}}
	
	\caption{Canonical ALS approximation perturbation dependence on the tensor rank}
	\label{fig:canonical}	
\end{figure}

\begin{figure}
	\subfloat[Tensor dimension $d$ set to $3$]
	{\includegraphics[width=0.45\textwidth]{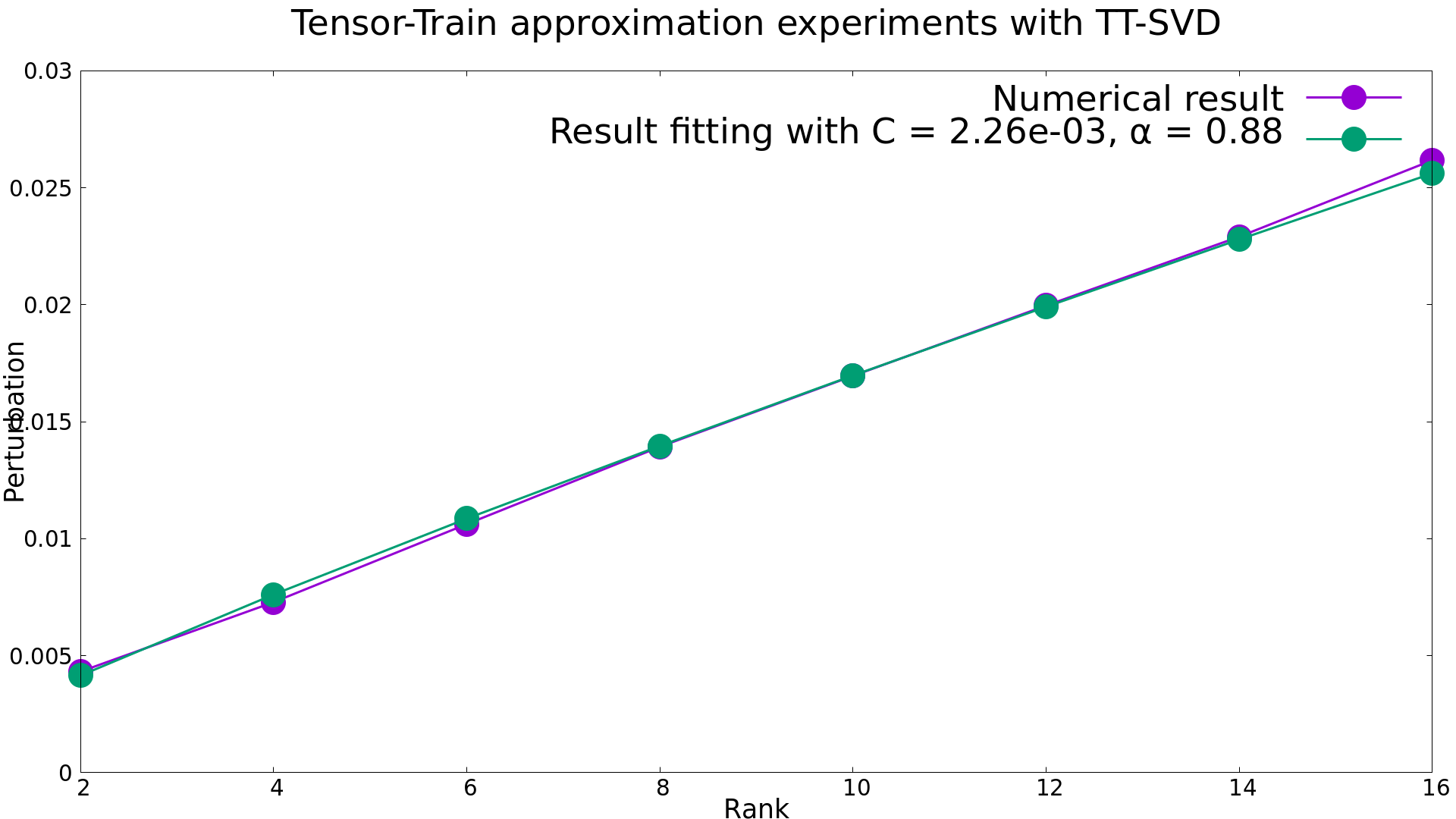}}
	\hfill
	\subfloat[Tensor dimension $d$ set to $6$]
		{\includegraphics[width=0.45\textwidth]{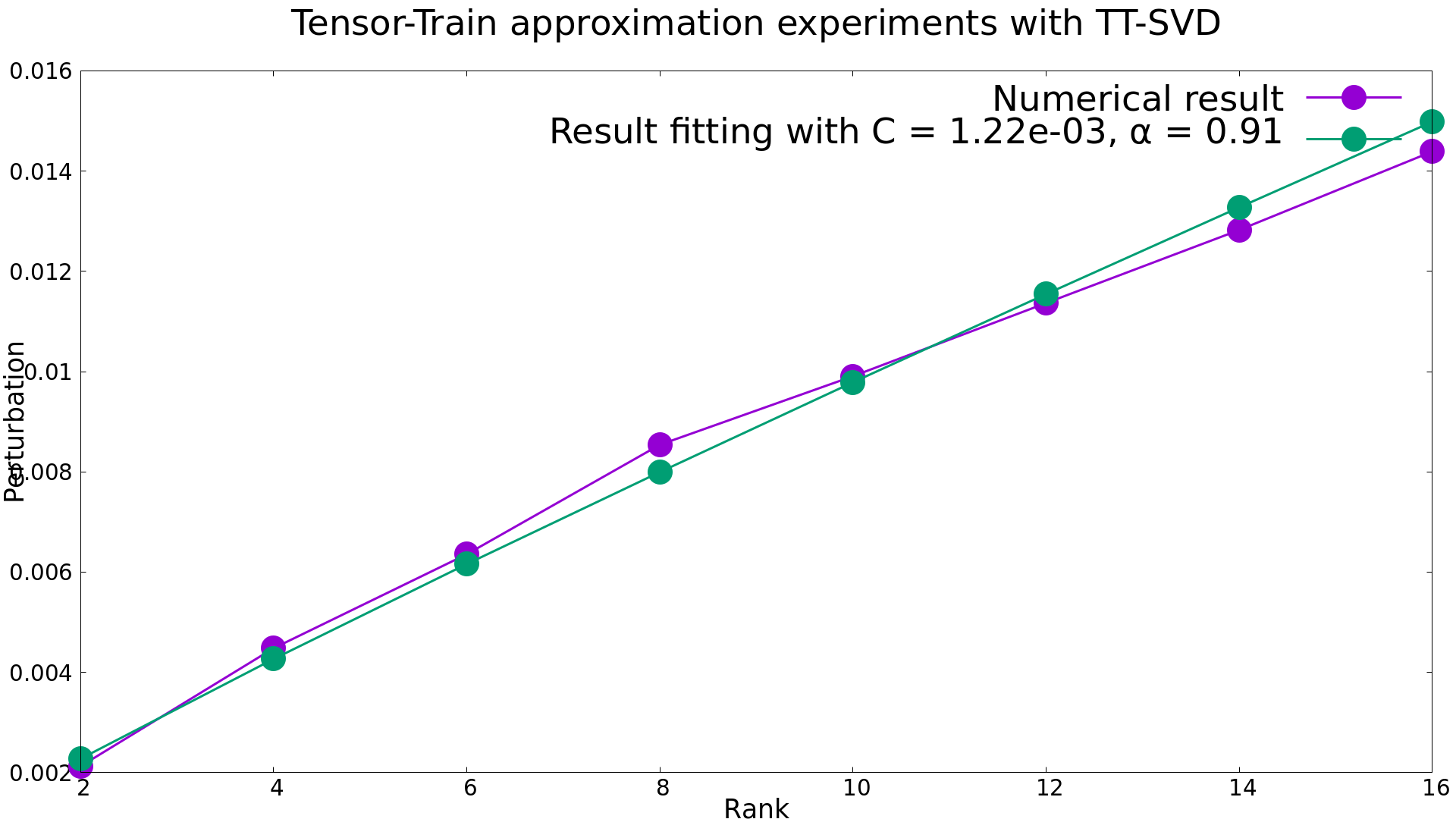}}
		
		\caption{Tensor-Train TT-SVD approximation perturbation dependence on the tensor rank}
		\label{fig:tt}	
	\end{figure}

\begin{figure}
	\subfloat[Tensor dimension $d$ set to $4$]
	{\includegraphics[width=0.45\textwidth]{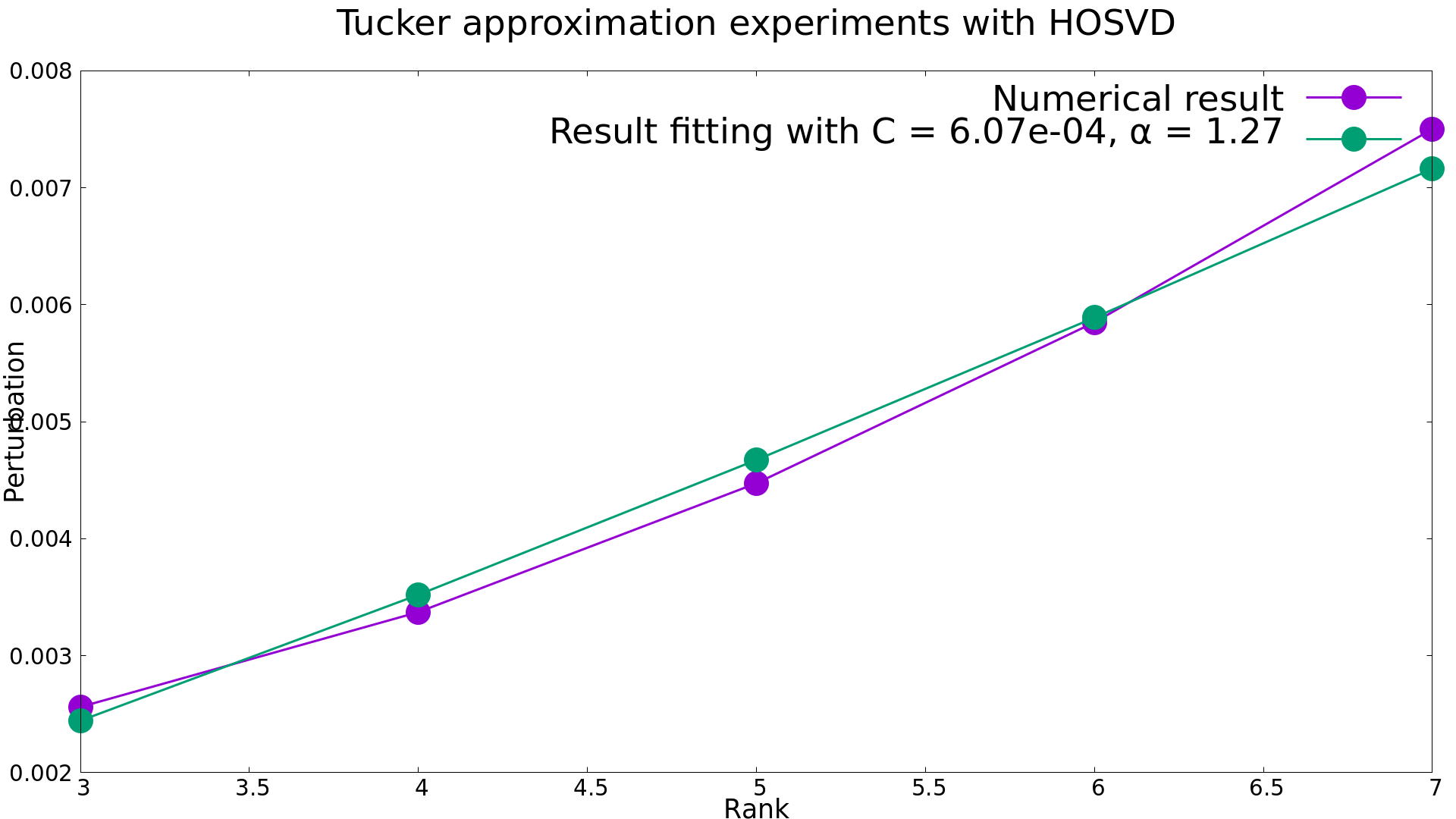}}
	\hfill
	\subfloat[Tensor dimension $d$ set to $5$]
	{\includegraphics[width=0.45\textwidth]{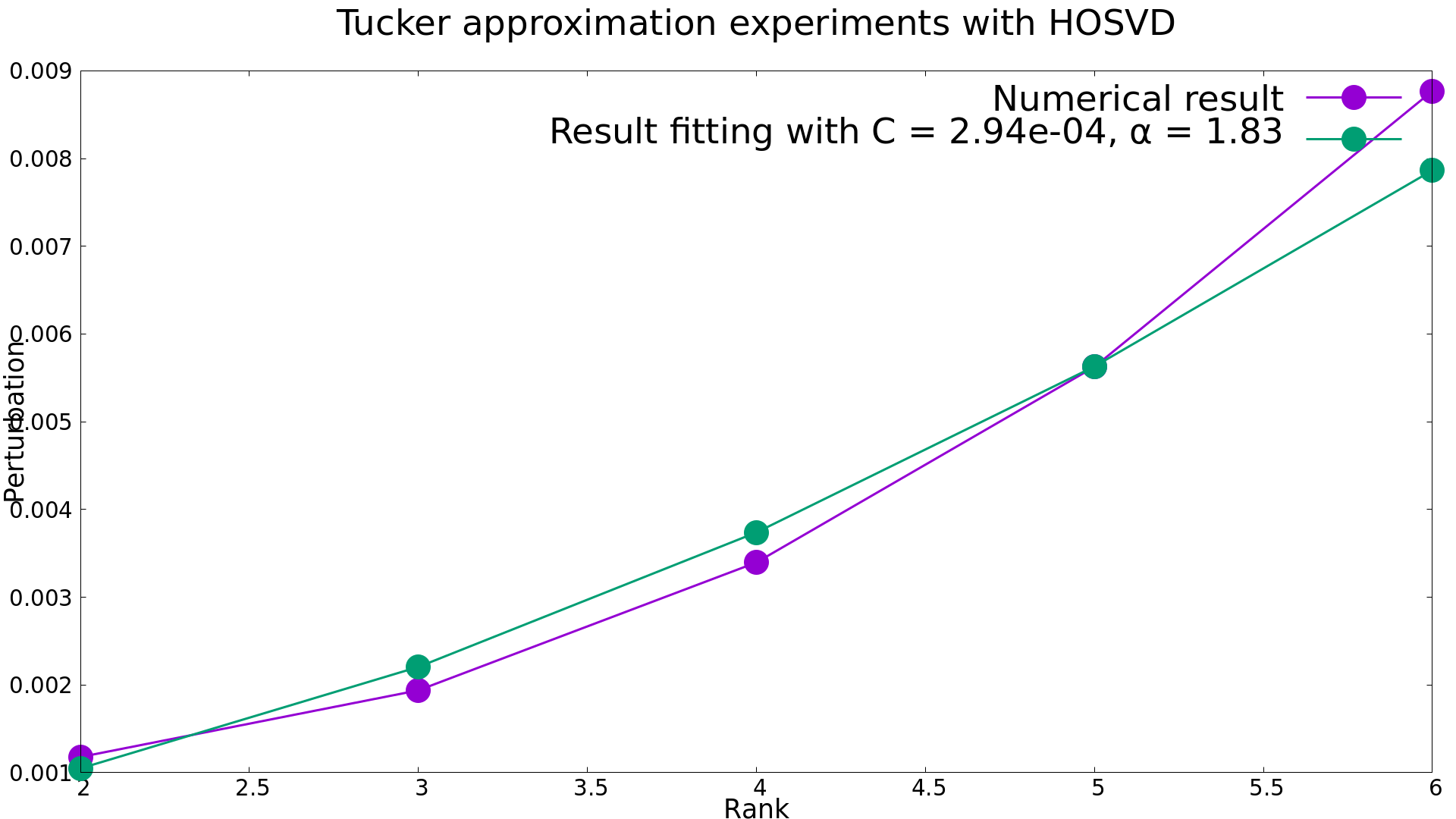}}
	
	\caption{Tucker HOSVD approximation perturbation dependence on the tensor rank}
	\label{fig:tucker}	
\end{figure}

\section{Discussion}

In this work, a novel theoretical bound is provided, which describes the improvement of the low-rank tensor noise filtration properties with the growth of structure dimensionality. Furthermore, a comparative numerical analysis is provided for the filtering properties of the canonical, Tensor-Train and Tucker low-rank representations.

The provided analysis has direct implications to the algorithms in the field of wireless communications, where the physical structure of the signal corresponds to a product of the so-called steering vectors of the form \cite{almeida}

\begin{equation*}
h(\phi) =
\begin{bmatrix}
1 & e^{i \phi} & e^{i 2 \phi} & e^{i 3 \phi} ... \\
\end{bmatrix}^T.
\end{equation*}

The exponent properties imply that if $h(\phi) \in \mathbb{C}^{M}$, then for any integer decomposition $M = \prod \limits_{s=1}^d m_s$ a tensorized ${\bf H} \in \mathbb{C}^{m_1 \times m_2 \times \dots \times m_d}$ such that \mbox{$h(\phi) = \mbox{vec}({\bf H})$} has an exact rank-one representation. It then follows that the use of {\it artificial} dimensions $m_1, m_2 \dots m_d$ in tensor algorithms, also known as {\it quantization}, may greatly improve the noise-filtering properties of tensors in wireless applications.

\backmatter

\bmhead{Contributions}
S.P. provided the main theoretical result derivation, numerical experiments and the first manuscript draft. N.Z. initially proposed the idea of utilizing Gaussian width framework, provided supervision, proof simplifications, and draft revisions. All authors have read and approved the final manuscript. 

%%=============================================%%
%% For submissions to Nature Portfolio Journals %%
%% please use the heading ``Extended Data''.   %%
%%=============================================%%

%%=============================================================%%
%% Sample for another appendix section			       %%
%%=============================================================%%

%% \section{Example of another appendix section}\label{secA2}%
%% Appendices may be used for helpful, supporting or essential material that would otherwise 
%% clutter, break up or be distracting to the text. Appendices can consist of sections, figures, 
%% tables and equations etc.

%%===========================================================================================%%
%% If you are submitting to one of the Nature Portfolio journals, using the eJP submission   %%
%% system, please include the references within the manuscript file itself. You may do this  %%
%% by copying the reference list from your .bbl file, paste it into the main manuscript .tex %%
%% file, and delete the associated \verb+\bibliography+ commands.                            %%
%%===========================================================================================%%

\bibliography{sn-bibliography}% common bib file

%% BioMed_Central_Bib_Style_v1.01

\begin{thebibliography}{17}
% BibTex style file: bmc-mathphys.bst (version 2.1), 2014-07-24
\ifx \bisbn   \undefined \def \bisbn  #1{ISBN #1}\fi
\ifx \binits  \undefined \def \binits#1{#1}\fi
\ifx \bauthor  \undefined \def \bauthor#1{#1}\fi
\ifx \batitle  \undefined \def \batitle#1{#1}\fi
\ifx \bjtitle  \undefined \def \bjtitle#1{#1}\fi
\ifx \bvolume  \undefined \def \bvolume#1{\textbf{#1}}\fi
\ifx \byear  \undefined \def \byear#1{#1}\fi
\ifx \bissue  \undefined \def \bissue#1{#1}\fi
\ifx \bfpage  \undefined \def \bfpage#1{#1}\fi
\ifx \blpage  \undefined \def \blpage #1{#1}\fi
\ifx \burl  \undefined \def \burl#1{\textsf{#1}}\fi
\ifx \doiurl  \undefined \def \doiurl#1{\url{https://doi.org/#1}}\fi
\ifx \betal  \undefined \def \betal{\textit{et al.}}\fi
\ifx \binstitute  \undefined \def \binstitute#1{#1}\fi
\ifx \binstitutionaled  \undefined \def \binstitutionaled#1{#1}\fi
\ifx \bctitle  \undefined \def \bctitle#1{#1}\fi
\ifx \beditor  \undefined \def \beditor#1{#1}\fi
\ifx \bpublisher  \undefined \def \bpublisher#1{#1}\fi
\ifx \bbtitle  \undefined \def \bbtitle#1{#1}\fi
\ifx \bedition  \undefined \def \bedition#1{#1}\fi
\ifx \bseriesno  \undefined \def \bseriesno#1{#1}\fi
\ifx \blocation  \undefined \def \blocation#1{#1}\fi
\ifx \bsertitle  \undefined \def \bsertitle#1{#1}\fi
\ifx \bsnm \undefined \def \bsnm#1{#1}\fi
\ifx \bsuffix \undefined \def \bsuffix#1{#1}\fi
\ifx \bparticle \undefined \def \bparticle#1{#1}\fi
\ifx \barticle \undefined \def \barticle#1{#1}\fi
\bibcommenthead
\ifx \bconfdate \undefined \def \bconfdate #1{#1}\fi
\ifx \botherref \undefined \def \botherref #1{#1}\fi
\ifx \url \undefined \def \url#1{\textsf{#1}}\fi
\ifx \bchapter \undefined \def \bchapter#1{#1}\fi
\ifx \bbook \undefined \def \bbook#1{#1}\fi
\ifx \bcomment \undefined \def \bcomment#1{#1}\fi
\ifx \oauthor \undefined \def \oauthor#1{#1}\fi
\ifx \citeauthoryear \undefined \def \citeauthoryear#1{#1}\fi
\ifx \endbibitem  \undefined \def \endbibitem {}\fi
\ifx \bconflocation  \undefined \def \bconflocation#1{#1}\fi
\ifx \arxivurl  \undefined \def \arxivurl#1{\textsf{#1}}\fi
\csname PreBibitemsHook\endcsname

%%% 1
\bibitem{medic}
\begin{barticle}
\bauthor{\bsnm{Lalitha}, \binits{Y.}},
\bauthor{\bsnm{Latte}, \binits{M.V.}}:
\batitle{A novel approach noise filtration for mri image sample in medical
  image processing}.
\bjtitle{International Journal of Computer Science and Communication}
\bvolume{2}(\bissue{2}),
\bfpage{359}--\blpage{363}
(\byear{2011})
\end{barticle}
\endbibitem

%%% 2
\bibitem{worst}
\begin{barticle}
\bauthor{\bsnm{Shomorony}, \binits{I.}},
\bauthor{\bsnm{Avestimehr}, \binits{A.S.}}:
\batitle{Worst-case additive noise in wireless networks}.
\bjtitle{IEEE Transactions on Information Theory}
\bvolume{59}(\bissue{6}),
\bfpage{3833}--\blpage{3847}
(\byear{2013})
\end{barticle}
\endbibitem

%%% 3
\bibitem{sparse}
\begin{barticle}
\bauthor{\bsnm{Donoho}, \binits{D.L.}},
\bauthor{\bsnm{Elad}, \binits{M.}},
\bauthor{\bsnm{Temlyakov}, \binits{V.N.}}:
\batitle{Stable recovery of sparse overcomplete representations in the presence
  of noise}.
\bjtitle{IEEE Transactions on information theory}
\bvolume{52}(\bissue{1}),
\bfpage{6}--\blpage{18}
(\byear{2005})
\end{barticle}
\endbibitem

%%% 4
\bibitem{kahan}
\begin{barticle}
\bauthor{\bsnm{Davis}, \binits{C.}},
\bauthor{\bsnm{Kahan}, \binits{W.M.}}:
\batitle{Some new bounds on perturbation of subspaces}.
\bjtitle{Bulletin of the American Mathematical Society}
\bvolume{75}(\bissue{4}),
\bfpage{863}--\blpage{868}
(\byear{1969})
\end{barticle}
\endbibitem

%%% 5
\bibitem{wedin}
\begin{barticle}
\bauthor{\bsnm{Wedin}, \binits{P.-{\AA}.}}:
\batitle{Perturbation bounds in connection with singular value decomposition}.
\bjtitle{BIT Numerical Mathematics}
\bvolume{12}(\bissue{1}),
\bfpage{99}--\blpage{111}
(\byear{1972})
\end{barticle}
\endbibitem

%%% 6
\bibitem{image}
\begin{barticle}
\bauthor{\bsnm{Zhang}, \binits{H.}},
\bauthor{\bsnm{He}, \binits{W.}},
\bauthor{\bsnm{Zhang}, \binits{L.}},
\bauthor{\bsnm{Shen}, \binits{H.}},
\bauthor{\bsnm{Yuan}, \binits{Q.}}:
\batitle{Hyperspectral image restoration using low-rank matrix recovery}.
\bjtitle{IEEE transactions on geoscience and remote sensing}
\bvolume{52}(\bissue{8}),
\bfpage{4729}--\blpage{4743}
(\byear{2013})
\end{barticle}
\endbibitem

%%% 7
\bibitem{video}
\begin{bchapter}
\bauthor{\bsnm{Ji}, \binits{H.}},
\bauthor{\bsnm{Liu}, \binits{C.}},
\bauthor{\bsnm{Shen}, \binits{Z.}},
\bauthor{\bsnm{Xu}, \binits{Y.}}:
\bctitle{Robust video denoising using low rank matrix completion}.
In: \bbtitle{2010 IEEE Computer Society Conference on Computer Vision and
  Pattern Recognition},
pp. \bfpage{1791}--\blpage{1798}
(\byear{2010}).
\bcomment{IEEE}
\end{bchapter}
\endbibitem

%%% 8
\bibitem{wireless1}
\begin{barticle}
\bauthor{\bsnm{Gong}, \binits{X.}},
\bauthor{\bsnm{Chen}, \binits{W.}},
\bauthor{\bsnm{Sun}, \binits{L.}},
\bauthor{\bsnm{Chen}, \binits{J.}},
\bauthor{\bsnm{Ai}, \binits{B.}}:
\batitle{An esprit-based supervised channel estimation method using tensor
  train decomposition for mmwave 3-d mimo-ofdm systems}.
\bjtitle{IEEE Transactions on Signal Processing}
\bvolume{71},
\bfpage{555}--\blpage{570}
(\byear{2023})
\end{barticle}
\endbibitem

%%% 9
\bibitem{wireless2}
\begin{barticle}
\bauthor{\bsnm{Zhou}, \binits{Z.}},
\bauthor{\bsnm{Fang}, \binits{J.}},
\bauthor{\bsnm{Yang}, \binits{L.}},
\bauthor{\bsnm{Li}, \binits{H.}},
\bauthor{\bsnm{Chen}, \binits{Z.}},
\bauthor{\bsnm{Blum}, \binits{R.S.}}:
\batitle{Low-rank tensor decomposition-aided channel estimation for millimeter
  wave mimo-ofdm systems}.
\bjtitle{IEEE Journal on Selected Areas in Communications}
\bvolume{35}(\bissue{7}),
\bfpage{1524}--\blpage{1538}
(\byear{2017})
\end{barticle}
\endbibitem

%%% 10
\bibitem{widthbook}
\begin{bbook}
\bauthor{\bsnm{Vershynin}, \binits{R.}}:
\bbtitle{High-dimensional Probability: An Introduction with Applications in
  Data Science}
vol. \bseriesno{47}.
\bpublisher{Cambridge university press}, \blocation{???}
(\byear{2018})
\end{bbook}
\endbibitem

%%% 11
\bibitem{als}
\begin{barticle}
\bauthor{\bsnm{Comon}, \binits{P.}},
\bauthor{\bsnm{Luciani}, \binits{X.}},
\bauthor{\bsnm{De~Almeida}, \binits{A.L.}}:
\batitle{Tensor decompositions, alternating least squares and other tales}.
\bjtitle{Journal of Chemometrics: A Journal of the Chemometrics Society}
\bvolume{23}(\bissue{7-8}),
\bfpage{393}--\blpage{405}
(\byear{2009})
\end{barticle}
\endbibitem

%%% 12
\bibitem{hosvd}
\begin{barticle}
\bauthor{\bsnm{De~Lathauwer}, \binits{L.}},
\bauthor{\bsnm{De~Moor}, \binits{B.}},
\bauthor{\bsnm{Vandewalle}, \binits{J.}}:
\batitle{A multilinear singular value decomposition}.
\bjtitle{SIAM journal on Matrix Analysis and Applications}
\bvolume{21}(\bissue{4}),
\bfpage{1253}--\blpage{1278}
(\byear{2000})
\end{barticle}
\endbibitem

%%% 13
\bibitem{tt}
\begin{barticle}
\bauthor{\bsnm{Oseledets}, \binits{I.V.}},
\bauthor{\bsnm{Tyrtyshnikov}, \binits{E.E.}}:
\batitle{Breaking the curse of dimensionality, or how to use svd in many
  dimensions}.
\bjtitle{SIAM Journal on Scientific Computing}
\bvolume{31}(\bissue{5}),
\bfpage{3744}--\blpage{3759}
(\byear{2009})
\end{barticle}
\endbibitem

%%% 14
\bibitem{width}
\begin{barticle}
\bauthor{\bsnm{Plan}, \binits{Y.}},
\bauthor{\bsnm{Vershynin}, \binits{R.}},
\bauthor{\bsnm{Yudovina}, \binits{E.}}:
\batitle{High-dimensional estimation with geometric constraints}.
\bjtitle{Information and Inference: A Journal of the IMA}
\bvolume{6}(\bissue{1}),
\bfpage{1}--\blpage{40}
(\byear{2017})
\end{barticle}
\endbibitem

%%% 15
\bibitem{silva}
\begin{barticle}
\bauthor{\bsnm{De~Silva}, \binits{V.}},
\bauthor{\bsnm{Lim}, \binits{L.-H.}}:
\batitle{Tensor rank and the ill-posedness of the best low-rank approximation
  problem}.
\bjtitle{SIAM Journal on Matrix Analysis and Applications}
\bvolume{30}(\bissue{3}),
\bfpage{1084}--\blpage{1127}
(\byear{2008})
\end{barticle}
\endbibitem

%%% 16
\bibitem{laurentmassart}
\begin{botherref}
\oauthor{\bsnm{Laurent}, \binits{B.}},
\oauthor{\bsnm{Massart}, \binits{P.}}:
Adaptive estimation of a quadratic functional by model selection.
Annals of statistics,
1302--1338
(2000)
\end{botherref}
\endbibitem

%%% 17
\bibitem{almeida}
\begin{botherref}
\oauthor{\bparticle{de} \bsnm{Almeida}, \binits{A.L.}}:
Tensor modeling and signal processing for wireless communication systems.
PhD thesis,
Universit{\'e} de Nice Sophia Antipolis
(2007)
\end{botherref}
\endbibitem

\end{thebibliography}
%% if required, the content of .bbl file can be included here once bbl is generated
%%\input sn-article.bbl

%% Default %%
%%\input sn-sample-bib.tex%

\end{document}